\documentclass[11pt]{article}
\usepackage{amssymb}
\usepackage{amsthm}
\usepackage{mathrsfs}
\usepackage{mathtools}
\usepackage{esint}
\newtheorem{thm}{Theorem}[section]

\newtheorem{lem}[thm]{Lemma}
\newtheorem{prop}[thm]{Proposition}
\theoremstyle{definition}
\newtheorem{defn}[thm]{Definition}
\theoremstyle{remark}
\newtheorem{rem}[thm]{Remark}

\numberwithin{equation}{section}

\def\Xint#1{\mathchoice
	{\XXint\displaystyle\textstyle{#1}}%
	{\XXint\textstyle\scriptstyle{#1}}%
	{\XXint\scriptstyle\scriptscriptstyle{#1}}%
	{\XXint\scriptscriptstyle\scriptscriptstyle{#1}}%
	\!\int}
\def\XXint#1#2#3{{\setbox0=\hbox{$#1{#2#3}{\int}$}
		\vcenter{\hbox{$#2#3$}}\kern-.5\wd0}}

\def\dashint{\Xint-}
\usepackage{color}
\usepackage[margin=2cm]{geometry}

\begin{document}
	
	%
	%
	%
	%
	%
	%
	%
	%
	%
	
	
	\title{Homogenization and integral representation of energy functionals in manifold valued Orlicz-Sobolev spaces}
	\author{Joseph Dongho,\footnote{University of Maroua, Department of Mathematics and Computer Science, P.O. box 814, Maroua, Cameroon email:joseph.dongho@fs.univ-maroua.cm, postumus} \,
		Joel Fotso Tachago,\footnote{University of Bamenda, 	Higher Teachers Training College, Department of Mathematics
			P.O. Box 39
			Bambili, Cameroon, email:fotsotachago@yahoo.fr}  \\
		Franck Tchinda,\footnote{University of Bertoua, Department of Mathematics, Statistics and Computer Science, P.O. Box 652, Bertoua, Cameroon email:takougoumfranckarnold@gmail.com} \, 	and Elvira
		Zappale\footnote{Department of Basic and Applied Sciences for Engineering, Sapienza-  University of Rome, via
			A. Scarpa,  16 (00161) Roma, Italy, e-mail:
			elvira.zappale@uniroma1.it}}\date{ } \maketitle
	\begin{abstract} 
	This paper aims to extend to Orlicz-Sobolev spaces some results of integral representation for the simultaneous homogenization and dimensional reduction of integral energies defined on fields taking values on a differentiable manifold. Since our functional framework goes beyond the classical Sobolev's spaces, we also prove, via $\Gamma$-convergence, a general integral representation results in the unconstrained Orlicz setting. 
        Due to $\Delta_2$ and $\nabla_2$ conditions verified by the Young function $\Phi$ (which modulated the growth behaviour), we prove that the density of the $\Gamma$-limit is a tangential quasiconvex integrand represented by a cell formula.  

		\medskip
        {\bf Keywords.} Homogenization, dimensional reduction, manifold-valued Orlicz.Sobolev spaces, $\Gamma$-convergence, micromagnetics.
\smallskip\par
\noindent 
{\bf Mathematics Subject Classification.} 74Q05, 49J45, 49Q20, 78A99, 28A33, 46E30.
	\end{abstract}

	\maketitle
	
	
	\section{Introduction} \label{sect1} 

The so called homogenization theory consists of detecting the macroscopic overall behaviour of a model (described by means either of
partial differential equations or energy functional) with heterogeneous coefficients
that (periodically) oscillate on a smaller scale, say $\varepsilon$. In order to rigorously derive  properties by means of a limiting procedure as the fine-scale $\varepsilon$ converges to zero, many tools have been developed in the last decades ranging from
asymptotic expansion methods (e.g. see A. E. Sanchez-Palencia \cite{SP80} and the references therein) or the H-convergence methods due to
F. Murat and L. Tartar \cite{Tar77, Tar09, FMT09} or the two-scale convergence \cite{Ngu89, All2}, more recently re-casted in terms of a fixed functional space, i.e. within the theory of periodic unfolding (see \cite{CDGbook} for an exhaustive treatment).

From the variational stand-point a crucial tool is De Giorgi's $\Gamma$-convergence (see \cite{DMbook} for details and results).

A rich area of research, due to the many application, is the derivation of
lower dimensional theories, i.e. the modeling of thin structures in elasticity and micromagnetics. In this context, one is interested in detecting a reduced model, via  asymptotic analysis departing from a slender body,  usually in $3D$,  
letting the geometry of the body become singular in one or more directions.
Many results have been proven in this context again exploiting
 $\Gamma$-convergence as in the pioneering papers \cite{ABP91}, \cite{LDR95} (dealing with the pure elastic setting) or in \cite{GJ}, \cite{AL01}, \cite{P}, \cite{BZ07}, \cite{GH, GH2} (in the micromagnetic and ferromagnetic framework), within a wider literature. 
  We refer to \cite{FIOM14}, \cite{KK16}, \cite{BV17}, \cite{FG23}, \cite{CGO24}, among a vast bibliography for simultaneous dimension reduction and homogenization in the realm of nonlinear heterogeneous thin structures and composites with standard growth or with the two-scale convergence technique and for plates and rods (\cite{Neukamm}). On the other hand, the same procedure has not been taken into account in the constrained setting, 
 
 Very recently in \cite{ELTZ} the simultaneous dimension reduction and homogenization has been obtained with micromagnetic type constraint, within the classical Sobolev setting and in \cite{LTZ} in the linear growth case, i.e. for manifold valued $BV$-maps. 
 
 In this paper we aim at extending the results in \cite{ELTZ} casting our integral functionals in the Orlicz framework, thus allowing for more general growth conditions. 
 
 In details, in order to model problems dealing with liquid crystals, magnetostrictive or ferromagnetic materials, we consider here a slender domain approaching a reduced one as the heterogeneity becomes finely and finely distributed, i.e. we consider homogenization and dimensional reduction to happen at the same {\it speed}. This analysis, despite, in principle very similar to the one with classical power growth-type energies, in \cite{ELTZ}, cannot rely on the same exact tools. In fact, first  we need to prove more general integral representation results in the unconstrained setting, i.e. we extend to the Sobolev-Orlicz frame, some of the results contained in \cite{BFF}, cf. Theorems \ref{thm2.5BFF} and \ref{intrep} below. In particular this result, relying on more general functions spaces properties and abstract integral representation theorems (see \cite{AC2005} and \cite{MM}) constitutes a generalization of the particular cases treated in \cite{LN, Elvira3} in the dimensional reduction context and of the homogenization results in \cite{tacha2, tacha5, nguet3, tacha7, tacha1,  tacha6, tacha3}.

 At this point it is also worth to underline that 
our results extend to the Orlicz framework the theorems obtained in \cite{baba2} for the pure homogenization manifold valued setting homogeneous Orlicz-Sobolev and those proven in \cite{DFMT}
in the surface valued relaxation and integral representation, see Theorems \ref{c1eq3Ma} and \ref{c1eq6Ma} in Subsection \ref{Corollaries}.


We are now in position to present the model we are focusing on. We assume that our domain is an inhomogeneous cylinder, whose microstructure is distributed with periodicity within the material and this is described by the small real parameter $\varepsilon >0$ comparable with the height of the domain. The equilibrium configurations are the (almost) minimizers of an
integral functional taking the form 
\begin{equation}\label{funct1}
\int_{\omega_\varepsilon } f\left ( \frac{x}{\varepsilon}, \nabla u\right ) \, dx \qquad u: \omega_\varepsilon  \rightarrow \mathbb{R}^3,
\end{equation}
subject to suitable boundary conditions, and where $\omega \subset \mathbb{R}^2$ is a bounded open set, $\omega_\varepsilon:= \omega\times (-\frac{\varepsilon}{2},\frac{\varepsilon}{2})$ and $f: \mathbb{R}^3 \times \mathbb{R}^{3 \times 3}\rightarrow \mathbb{R}$
is a periodic integrand with respect to the first variable, and $u$ is a manifold-valued Orlicz-Sobolev field, that will be specialized in the sequel. 


We also assume that $f$ has suitable growth and coercivity in the gradient variable, stated in terms of the Young ($N$)-function $\Phi$ which describes the Orlicz space. 

Our contribution consists in applying both homogenization and dimension reduction simultaneously, assuming the functional defined on the space of manifold-valued Orlicz-Sobolev fields.


We consider the model $3D-2D$ but our analysis could be extended to other dimensions, i.e. to the framework $ND- (N-d)D$. 

More precisely, we focus on the energy in \eqref{funct1} assuming that
$f:\mathbb R^3 \times\mathbb R^{3\times 3}\to \mathbb{R}$ is a Carath\'eodory function, and
$\mathcal{M}$ is a smooth submanifold of $\mathbb{R}^3$ without boundary. In particular, we assume that $f$ has the following properties:
\begin{itemize}
    \item [(H1)] $f(\cdot, x_3, \xi)$ is 1-periodic, i.e. for $\mathcal L^3$-a.e. $(x_{\alpha}, x_3) \in \mathbb{R}^3$ and $\xi \in \mathbb{R}^{3 \times 3}$ it holds
    \begin{equation*}
        f(x_{\alpha} + \mathbf{e}_i, x_3, \xi) = f(x_{\alpha}, x_3, \xi), \qquad \forall i = 1,2
    \end{equation*}
    where $\{\mathbf{e}_1, \mathbf{e}_2\}$ is the canonical basis of $\mathbb{R}^2.$

    \item [(H2)] 
There exists a  Young function $\Phi$ of class $\Delta_{2}\cap\nabla_{2}$ (see \eqref{Delta2alpha} and \eqref{nabla2beta} in Section \ref{sect2M}), such that, for almost all $y\equiv(y_\alpha, y_3) \in \mathbb{R}^{3}$, $f(y, \cdot)$ satisfies $\Phi$-growth and $\Phi$-coercivity conditions, i.e., there exist $0< \alpha \leq \beta < +\infty$ such that: 
		\begin{equation*}
			\alpha \Phi(|\xi|) \leq f(y, \xi) \leq \beta (1+\Phi(|\xi|)),
		\end{equation*}
    for a.e. $y\in\mathbb R^3$ and for every $\xi\in\mathbb R^{3\times 3}$.
\end{itemize}

Referring to subsection \ref{sect2M} for detailed notation, we define the functional $\tilde{I}^\varepsilon: L^\Phi(\omega_{\varepsilon};\mathcal{M}) \rightarrow \overline{\mathbb{R}}$
\[
\tilde{I}^\varepsilon(u) := \left \{
\begin{array}{lll}
\!\!\!\!\!\! & \displaystyle \frac{1}{\varepsilon}\int_{\omega_{\varepsilon}} f \left (\frac{x}{\varepsilon}, \nabla u\right ) \, dx \qquad & \textnormal{if }u \in W^{1}L^{\Phi}(\omega_{\varepsilon}; \mathcal{M})\\[4mm]
\!\!\!\!\!\! & + \infty &\textnormal{elsewhere.}
\end{array}
\right.
\]
The study of the $\Gamma-$limit of $\tilde{I}^\varepsilon$ is equivalent to the study of the $\Gamma-$limit of the rescaled functional $I^\varepsilon$ defined as
\begin{equation}
\label{functional}
    I^\varepsilon(u) := \left \{
\begin{array}{lll}
\!\!\!\!\!\! & \displaystyle \int_{\Omega} f \left (\frac{x_{\alpha}}{\varepsilon}, x_3, \nabla_\varepsilon u\right ) \, dx \qquad & \textnormal{if $u \in W^{1}L^{\Phi}(\Omega; \mathcal{M})$}\\[4mm]
\!\!\!\!\!\! & + \infty &\textnormal{elsewhere}
\end{array}
\right.
\end{equation}
with $\Omega := \omega \times \left (- \frac{1}{2}, \frac{1}{2} \right ) = \omega_{1}$, $x_\alpha:=(x_1,x_2)\in \omega$ and $\nabla_\varepsilon:=[\frac{\partial}{\partial x_1}, \frac{\partial}{\partial x_2}, \frac{1}{\varepsilon}\frac{\partial}{\partial x_3}]$.
From here onward, we denote $\nabla_\alpha:=[\frac{\partial}{\partial x_1}| \frac{\partial}{\partial x_2}]$ and $\nabla_3:=\frac{\partial}{\partial x_3}$, and $\nabla_\varepsilon:=[\nabla_\alpha| \frac{1}{\varepsilon}\nabla_3]$. Moreover, we denote with $\xi_\alpha:=[\xi_1 | \xi_2]$ an element of $\mathbb R^{3\times 2}$ and with $\xi:=[\xi_\alpha|\xi_3]$ an element of $\mathbb R^{3\times 3}$, with $\xi_i$ a column vector in  $\mathbb R^3$, $i=1,\dots, 3$.\\

\noindent
Our main result is the following, which, having in mind concrete applications, is stated in dimension three, but the result holds in generic dimensions $N$ and $d$ (domain and range dimensions, respectively).

\begin{thm}
\label{casopm1}
    Let $\Phi$ be a Young function of class $\Delta_{2}\cap\nabla_{2}$. Assume that $\mathcal{M}$ is a connected smooth manifold of $\mathbb{R}^3$ without boundary and let $f:\mathbb R^3 \times\mathbb R^{3\times 3}\to \mathbb{R}$ be a Carathéodory function satisfying {\rm (H1)} and {\rm (H2)}. Let $\omega\subset \mathbb R^2$ be a bounded open set with Lipschitz boundary. Let $\Omega:=\omega \times(-1/2,1/2)$ and for every $\varepsilon \in \mathbb R^+$, let $I_\varepsilon$ be as in \eqref{functional}. Then, the $\Gamma-$limit of $I^\varepsilon$ as $\varepsilon \rightarrow 0$ with respect to the strong $L^\Phi(\Omega)$-topology is the functional $I: L^\Phi(\omega; \mathcal{M}) \rightarrow \overline{\mathbb{R}}$ given by 
\begin{equation}
\label{candidatepm1}
    I(u) = \left \{
\begin{array}{lll}
\!\!\!\!\!\! & \displaystyle\int_{\omega} T f_{\rm hom}^0(u, \nabla_\alpha u) \, dx_\alpha, & \textnormal{if $u \in W^{1}L^{\Phi}(\omega; \mathcal{M})$}\\[4mm]
\!\!\!\!\!\! & + \infty &\textnormal{elsewhere,}
\end{array}
\right.
\end{equation}
with $T f^0_{\rm hom}: \mathbb{R}^3 \times \mathbb{R}^{3 \times 2} \rightarrow \mathbb{R}$ defined as
\begin{align}
\label{Tfhom}
&Tf^0_{\rm hom}(s,\xi_\alpha) :=  \liminf_{t \rightarrow + \infty} \inf_{\varphi} \Bigg \{\frac{1}{t^2} \int_{(tQ')_{,1}} f(x_{\alpha}, x_3,\xi_\alpha + \nabla_{\alpha} \varphi | \nabla_3 \varphi) \, d x_{\alpha} d x_3: \nonumber\\
& \, \varphi \in W^{1,\infty}((tQ')_{,1}; T_s (\mathcal{M})), \,\, \varphi(x_{\alpha}, x_3) = 0\,\,\, \textnormal{for every $(x_{\alpha}, x_3) \in \partial (tQ') \times \left(-\frac{1}{2},\frac{1}{2}\right)$} \Bigg \},
\end{align}
where $Q':=\left(-\frac{1}{2},\frac{1}{2}\right)^2,$ $(tQ')_{,1}:=tQ'\times \left(-\frac{1}{2},\frac{1}{2}\right), $ and $T_s (\mathcal{M})$ denotes the tangent space to $\mathcal{M}$ in $s$.
\end{thm}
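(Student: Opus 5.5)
We follow the strategy of \cite{ELTZ}, adapted to the Orlicz setting. The argument has three parts: a reduction to an unconstrained problem, an integral representation for that problem, and an identification of the density.

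\textbf{Reduction to an unconstrained problem.} Since $\mathcal M$ is a smooth compact-type submanifold without boundary, we fix a tubular neighbourhood $U_\delta$ of $\mathcal M$ on which the nearest point projection $\Pi$ is smooth. We then introduce a modified integrand
\[
g(y,s,\xi):=f(y,\xi\circ P_s)+\Phi(|\xi-\xi\circ P_s|),
\]
where $P_s$ is the orthogonal projection onto $T_{\Pi(s)}\mathcal M$. Up to constants, $g$ satisfies (H1)--(H2) uniformly in $s$, and it coincides with $f$ on tangential matrices, i.e. when $s\in\mathcal M$ and the columns of $\xi$ lie in $T_s\mathcal M$. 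Because $\Phi\in\Delta_2\cap\nabla_2$, the space $W^1L^\Phi$ is reflexive and Rellich compactness holds. Coercivity then gives that sequences with bounded energy $I^\varepsilon(u_\varepsilon)$ are bounded in $W^1L^\Phi(\Omega)$, with $\|\nabla_3u_\varepsilon\|_{L^\Phi}\lesssim\varepsilon$. Hence, up to subsequences, $u_\varepsilon\to u$ strongly in $L^\Phi$, with $u$ independent of $x_3$, $u\in W^1L^\Phi(\omega;\mathcal M)$, and the constraint passes to the limit a.e. This identifies the domain of $I$.

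\textbf{Integral representation of the $\Gamma$-limit.} We localize on open sets $A\subset\omega$ and consider $I^\varepsilon(u,A\times(-\frac12,\frac12))$. For subsequences we show that the $\Gamma$-limit $\mathcal I(u,A)$ is the restriction to $\omega$-open sets of a Radon measure. The key tool is a De Giorgi slicing/fundamental estimate in the Orlicz setting, where $\Delta_2$ is used to control $\Phi(|\nabla\psi|\,|u-v|)$. To apply it in the constrained case, we glue maps in a small layer and then project back via $\Pi$; this requires the glued map to remain in $U_\delta$. We obtain this by an equi-integrability and De Giorgi truncation argument, following \cite{ELTZ} and \cite{baba2}, which ensures that the interpolation occurs where $|u_\varepsilon-v_\varepsilon|$ is small. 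The growth bound is $\mathcal I(u,A)\le C\int_A(1+\Phi(|\nabla u|))$.

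Together with invariance under translations in $x_\alpha$, this lets us invoke the abstract representation results already established, namely Theorems \ref{thm2.5BFF} and \ref{intrep}, in the form adapted to manifold-valued maps as in \cite{DFMT}. This yields $\mathcal I(u,A)=\int_A W(u,\nabla_\alpha u)$ for some density $W$.

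\textbf{Identification of the density.} We show $W=Tf^0_{\rm hom}$ via a blow-up argument at Lebesgue points $x_0$. For the lower bound, we blow up at $x_0$ and freeze $s=u(x_0)$. A recovery sequence near $x_0$, rescaled by $t=r/\varepsilon$, produces competitors on $(tQ')_{,1}$. Projecting them onto the affine tangent space $T_s\mathcal M$, with an error controlled by the $W^1L^\Phi$-convergence and by continuity of $\Pi$, and correcting the lateral boundary values through the fundamental estimate, gives
\[
W(s,\xi_\alpha)\ge Tf^0_{\rm hom}(s,\xi_\alpha).
\]
The reduction to $W^{1,\infty}$ test functions follows by density under $\Delta_2$.

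For the upper bound, we first treat piecewise affine $u$ (suitably mapped onto $\mathcal M$) locally. We take near-optimal $\varphi$ for $Tf^0_{\rm hom}$ and set
\[
u_\varepsilon(x)=\Pi\bigl(u(x_\alpha)+\varepsilon\varphi(x_\alpha/\varepsilon,x_3)\bigr),
\]
arguing by periodicity. Since $\varphi$ is Lipschitz, $u_\varepsilon$ stays in $U_\delta$. The linearization of $\Pi$ at tangent directions gives the correct gradient up to $o(1)$, and growth together with $\Delta_2$ controls the errors. We then extend to general $u$ using the density of smooth $\mathcal M$-valued maps in $W^1L^\Phi(\omega;\mathcal M)$, or using the fact that the limit is already a measure with a density. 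Here we also need $Tf^0_{\rm hom}$ to be continuous and tangentially quasiconvex, which follows from $\Phi$-growth.

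\textbf{Main obstacle.} We expect the hardest step to be the fundamental estimate and gluing under the manifold constraint in the Orlicz setting. Without $p$-homogeneity, the cutoff error must be controlled by $\Delta_2$ and $\nabla_2$ via modular inequalities rather than by Hölder's inequality. At the same time, the glued maps must stay near $\mathcal M$, which needs uniform estimates in $L^\Phi$ rather than in $L^\infty$. Our first attempt will be to work with the unconstrained extension $g$ wherever possible, and to confine projection issues to the blow-up step, where the relevant functions are close to constants.
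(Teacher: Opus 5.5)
There is a genuine gap in your middle step, and it is the one you flag as the main obstacle.

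\textbf{The measure property of $\mathcal I$.} You want to show that the $L^\Phi$-$\Gamma$-limit $\mathcal I(u,\cdot)$ itself is a measure. Your plan is to glue two recovery sequences in a layer and project back to $\mathcal M$, using ``equi-integrability and De Giorgi truncation'' to make $|u_\varepsilon-v_\varepsilon|$ small where you interpolate. This cannot work.
\begin{itemize}
\item Strong $L^\Phi$ convergence only makes $|u_\varepsilon-v_\varepsilon|$ small in measure.
\item Equi-integrability concerns gradients, not values, and truncation neither preserves the constraint nor gives pointwise closeness.
\item On the residual set the interpolated map leaves every tubular neighbourhood, and there is no controlled way to bring it back to $\mathcal M$.
\end{itemize}
Even granting the measure property, Theorems \ref{thm2.5BFF} and \ref{intrep} are not available. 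They rest on \cite{MM}, which needs a functional on a linear space, invariant under $u\mapsto u+c$, and it produces a density independent of $u$. \cite{DFMT} contains no abstract manifold-valued representation theorem.

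\textbf{The missing idea.} The paper, following \cite{baba2,ELTZ}, never proves that $\mathcal I$ is a measure.
\begin{itemize}
\item For the upper bound it uses the auxiliary functional $I^{\{\varepsilon_k\}}_{\mathcal K}$ of \eqref{IKunif}. Its competitors converge \emph{uniformly} to $u$ and equal $u$ where ${\rm dist}(u,\mathcal K)>1$. Uniform convergence is exactly what places $(v_k,u_k)$ in the domain $D_\delta$ of the interpolation map $\Pi(s_0,s_1,t)$ of \cite{DFMT}. This gives nested subadditivity and the measure property (Lemma \ref{preupperbound}). Since $\mathcal I\le I^{\{\varepsilon_k\}}_{\mathcal K}$, a Radon--Nikod\'ym estimate at Lebesgue points yields the upper bound.
\item For the lower bound one blows up the weak$^*$ limit $\mu$ of the energy densities of a recovery sequence, so no representation of $\mathcal I$ is needed.
\end{itemize}
Your blow-up can be rerun on $\mu$ instead of on $W$, but two ingredients are missing. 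You need the Orlicz decomposition lemma and a Scorza--Dragoni argument to freeze $s=s_0$; there is no $W^1L^\Phi$-convergence of gradients to control the projection error. Also, the decomposition lemma destroys tangency. This is why the paper works with $\bar f(\cdot,s_0,\cdot)$, applies Theorem \ref{intrep}, and then uses Proposition \ref{characterization}(i).

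\textbf{The upper bound.} This step has two further problems.
\begin{itemize}
\item $\mathcal M$ is not assumed compact, so a tubular neighbourhood of uniform width need not exist. This is why the paper takes $\mathcal K=\mathcal M\cap B^3(0,R)$, proves \eqref{(4.1)}, and lets $R\to\infty$.
\item Smooth $\mathcal M$-valued maps are in general not dense in $W^1L^\Phi(\omega;\mathcal M)$, because of Bethuel-type topological obstructions (e.g.\ $\pi_1(\mathcal M)\neq 0$ with subquadratic $\Phi$). Moreover, projected piecewise affine maps have neither constant gradient nor constant tangent space. A density argument would therefore also need continuity of $Tf^0_{\rm hom}$ in $s$, which you do not establish.
\end{itemize}
Your fallback, ``the limit is already a measure with a density'', relies on the unproven step above. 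Projecting $u+\varepsilon\varphi(x_\alpha/\varepsilon,x_3)$ back onto $\mathcal M$ is the right kind of local ingredient. It must, however, be carried out for general $u$ at Lebesgue points of the measure $I^{\{\varepsilon_k\}}_{\mathcal K}(u,\cdot)$, not combined with a density argument.
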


\color{black}

    At this point, for a better understanding of the above statement, it is worth to observe that
		for a smooth $\mathcal{M}$-valued map $u$, it is well known that the first order derivatives belong to the tangent space of $\mathcal{M}$. More precisely, for any open set $U\subset \mathbb R^N$, with Lipschitz boundary, $u \in 	W^{1}L^{\Phi}(U; \mathcal{M})$, this property still holds in the sense that $\nabla u(x) \in [T_{u(x)}(\mathcal{M})]^{N}$ for $\mathcal{L}^{N}-\mbox{a.e. } x\in U$.

	\subsection{Corollaries}\label{Corollaries}

    Taking into account that Theorem \ref{casopm1} can be proved for any $N$ and $d \in \mathbb N$, assuming that no dimensional reduction is taken into account, i.e. $\omega\subset \mathbb R^N$ is a generic open set with Lipschitz boundary, then the following result holds in the periodic setting, thus leading to the attainment of a homogenization result in the Sobolev-Orlicz setting which generalizes the result of  J.-F. Babadjian and V. Millot in \cite{baba2}.
    
	\begin{thm}\label{c1eq3Ma}
		Let $\Phi$ be a Young function of class $\Delta_{2}\cap\nabla_{2}$, $\mathcal{M}$ be a connected smooth submanifold of $\mathbb{R}^{N}$ without boundary and $f : \mathbb{R}^{N}\times\mathbb{R}^{d\times N} \to [0,\infty)$  be a Carathéodory function such that
        \begin{itemize}
    \item [(H1')] $f(\cdot, \xi)$ is 1-periodic, i.e. for $\mathcal L^N$-a.e. $x \in \mathbb{R}^N$ and $\xi \in \mathbb{R}^{d \times N}$ it holds
    \begin{equation*}
    \label{H1}
        f(x + \mathbf{e}_i, \xi) = f(x, \xi), \qquad \forall i = 1,\dots, N,
    \end{equation*}
    where $\{\mathbf{e}_1, \dots, \mathbf{e}_N\}$ is the canonical basis of $\mathbb{R}^N;$

    \item [(H2')] 
    for $\mathcal L^N$-a.e.$y \in \mathbb{R}^{N}$, $f(y, \cdot)$ satisfies $\Phi$-growth and $\Phi$-coercivity conditions, i.e., there exist $0< \alpha \leq \beta < +\infty$ such that: 
		\begin{equation*}
			\alpha \Phi(|\xi|) \leq f(y, \xi) \leq \beta (1+\Phi(|\xi|)),
		\end{equation*}
    for a.e. $y\in\mathbb R^N$ and for every $\xi\in\mathbb R^{d\times N}$.
\end{itemize}
Then the family of functionals $(\mathcal{F}_{\varepsilon})_{\varepsilon>0}$ defined as
\begin{equation}
\label{functionalFe}
    \mathcal F_\varepsilon(u) := \left \{
\begin{array}{lll}
\!\!\!\!\!\! & \displaystyle \int_{\omega} f \left (\frac{x}{\varepsilon}, \nabla u\right ) \, dx \qquad & \textnormal{if $u \in W^{1}L^{\Phi}(\omega; \mathcal{M})$}\\[4mm]
\!\!\!\!\!\! & + \infty &\textnormal{elsewhere}
\end{array}
\right.
\end{equation}$\Gamma$-converges for the strong $L^{\Phi}(\omega)$-topology to the functional $\mathcal{F}_{\textup{hom}} : L^{\Phi}(\Omega; \mathbb{R}^{d}) \to [0,\infty)$ defined by 
		\begin{equation*}
			\mathcal{F}_{\textup{hom}}(u) := \left\{\begin{array}{ll}
				\int_{\omega} Tf_{\textup{hom}}\left(u, \nabla u\right) dx & \mbox{if } u \in W^{1}L^{\Phi}(\omega; \mathcal{M}),  \\
				+\infty &  \mbox{otherwise},
			\end{array}\right.	
		\end{equation*}
		where, for all $s\in \mathcal{M}$ and $\xi \in [T_{s}(\mathcal{M})]^{N}$,
		\begin{equation*}
			Tf_{\textup{hom}}(s, \xi) := \lim_{t\to +\infty} \inf_{\varphi} \left\{\dashint_{(0,t)^{N}} f(y, \xi + \nabla \varphi(y)) dy\, : \, \varphi \in W^{1}_{0}L^{\Phi}((0,t)^{N}; T_{s}(\mathcal{M})) \right\}
		\end{equation*}
		is the tangentially homogenized energy density.
	\end{thm}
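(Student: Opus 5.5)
Theorem \ref{c1eq3Ma} is the case with no thin direction of the strategy used for Theorem \ref{casopm1}. I would follow the scheme of Babadjian--Millot \cite{baba2}, with Sobolev tools replaced by their Orlicz counterparts. These are available because $\Phi\in\Delta_2\cap\nabla_2$: the modular and the norm control each other, $W^{1}L^\Phi$ is reflexive, and $\Phi$ is squeezed between two powers $t^p$ and $t^q$ with $1<p\le q<\infty$.

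\textbf{Step 1 (existence and localization).} Localize the functionals to $\mathcal F_\varepsilon(u;A)$ for open sets $A\subset\omega$. By the compactness of $\Gamma$-convergence, every sequence $\varepsilon_j\to0$ has a subsequence along which the $\Gamma$-limit $\mathcal F(u;A)$ exists for every $A$ in a countable dense family, and hence for all $A$ by inner regularization.

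\textbf{Step 2 (finiteness and domain).} From (H2') and the $L^\Phi$-reflexivity, $\mathcal F(u;\omega)<\infty$ forces $u\in W^1L^\Phi(\omega;\mathcal M)$. The constraint is preserved because strong $L^\Phi$ convergence gives a.e. convergence along a subsequence and $\mathcal M$ is closed. Conversely, $\mathcal F(u;A)\le\beta\int_A(1+\Phi(|\nabla u|))$ for such $u$; this follows by taking the constant recovery sequence $u_\varepsilon=u$.

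\textbf{Step 3 (measure property).} Show that $A\mapsto\mathcal F(u;A)$ is the trace of a Radon measure absolutely continuous with respect to $\mathcal L^N$. This is the fundamental estimate, i.e. the De Giorgi slicing/cut-off lemma. In the manifold setting it cannot be carried out by convex combination, since $\varphi u_\varepsilon+(1-\varphi)v_\varepsilon\notin\mathcal M$. I would handle it as in \cite{baba2}: first reduce to maps equal to $u$ near the boundary layer, by projecting onto $\mathcal M$ through the nearest-point projection $\pi$ on a tubular neighbourhood. This is legitimate when the two sequences are uniformly close on the layer, which one obtains by choosing the layer via an averaging argument. Where they are not close, use the Hardt--Kinderlehrer--Lin type averaged projection $\pi_a$, with $a$ a translation chosen so that $\int\Phi(|\nabla(\pi_a\circ w)|)\le C\int\Phi(|\nabla w|)$. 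The averaging in $a$ works for $\Phi$ because $\Phi(\lambda t)\le C\lambda^q\Phi(t)$ and the singular set of $\pi_a$ has codimension larger than $q$ only after a dimension-reduction trick. So I expect this step to need the topological assumptions to be dealt with exactly as in \cite{baba2}, using $\nabla_2$ to bound $p>1$.

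\textbf{Step 4 (identification of the density).} With the measure property in hand, obtain $\mathcal F(u;A)=\int_A g(x,u,\nabla u)\,dx$ by the blow-up method at Lebesgue points $x_0$. Blowing up, $u$ becomes affine in tangent directions: $u(x_0)+\xi(y)$ with $\xi\in[T_{u(x_0)}\mathcal M]^N$. One then needs two things.
\begin{itemize}
\item A recovery sequence. Take $\pi\big(u(x_0)+\xi y+\varepsilon\varphi(y/\varepsilon)\big)$ with $\varphi$ tangent-valued and in $W^{1,\infty}_0$, combined with periodicity (H1') and a density of $W^{1,\infty}$ maps in $W^1_0L^\Phi$ (by $\Delta_2$). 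The projection error is then $o(1)$ in modular, since the curvature correction is of order $\varepsilon|\varphi|\,|\nabla\varphi|$.
\item The lower bound. Project the competing $\mathcal M$-valued maps onto the tangent space, fix boundary values by Step 3's cut-off, and rescale to cubes $(0,t)^N$.
\end{itemize}
Subadditivity in $t$ of the cell infimum shows that the limit defining $Tf_{\rm hom}$ exists and is independent of the subsequence. This identifies $g=Tf_{\rm hom}$ and gives full $\Gamma$-convergence via Urysohn's property.

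\textbf{Main obstacle.} The main difficulty is Step 3, namely the Orlicz version of the projection/averaging estimates for manifold-valued maps. The $\Delta_2$ growth bound replaces the homogeneity $|\lambda t|^p=\lambda^p t^p$ only up to constants, and one must check that all averaging arguments in $a$ survive this.
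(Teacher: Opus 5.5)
Your proposal has a genuine gap in Step 3, and the same gap reappears in the upper bound of Step 4. The paper obtains Theorem \ref{c1eq3Ma} as the no-thin-direction case of the proof of Theorem \ref{casopm1}, following \cite{baba2,ELTZ}, and it avoids the problem by a device your proposal lacks.

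Neither of your two gluing mechanisms works under the stated hypotheses. Averaging over the position of the transition layer only gives integral control of $\Phi(|\nabla u_\varepsilon|)$, $\Phi(|\nabla v_\varepsilon|)$ and $\Phi(|u_\varepsilon-v_\varepsilon|)$ on the layer. It never makes two sequences that converge to $u$ in $L^\Phi$ uniformly close there, so projecting their convex combination onto $\mathcal M$ is not justified. The Hardt--Kinderlehrer--Lin fallback needs ${\rm dist}(\cdot,\Sigma)^{-q}$ to be locally integrable near the singular set $\Sigma$ of the retraction. That amounts to $\mathcal M$ being compact and $[q-1]$-connected, which is a topological condition (it fails, e.g., for $\mathcal M=\mathbb S^1$ with $q\ge 2$). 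No dimension-reduction argument supplies it, and the theorem only assumes $\mathcal M$ is a connected smooth submanifold without boundary, not even compact.

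The missing idea is to build uniform closeness into the localized upper functional, as in \eqref{IKunif}. Fix a compact $\mathcal K=\mathcal M\cap B(0,R)$. Take the infimum only over sequences $u_k\to u$ uniformly, with gradients bounded in $L^\Phi$ and $u_k=u$ wherever ${\rm dist}(u,\mathcal K)>1$. This costs nothing, because this functional is only used to bound the $\Gamma$-limit from above. Two almost optimal sequences are then uniformly close near the compact set $\mathcal K'$. You glue them on the layers $S_i$ with the interpolation map $\Pi(s_0,s_1,t)$ of \cite{DFMT}, which satisfies $|\partial_t\Pi|\le c|s_0-s_1|$. The error term $\Phi(M|u_k-v_k|)$ vanishes by uniform convergence, and averaging over the $M$ layers only handles the gradient terms (Lemma \ref{preupperbound}). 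Non-compactness of $\mathcal M$ is handled by the splitting in \eqref{(4.1)} and then letting $R\to\infty$. You also do not need the $\Gamma$-limit itself to be a measure: for the lower bound the paper blows up the weak$^*$ limit $\mu$ of the energy densities of a recovery sequence.

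The same obstruction affects your Step 4 upper bound. A recovery sequence for the blown-up map $\pi(u(x_0)+\xi y)$ bounds $\mathcal F(u;Q(x_0,\rho))$ only if you can glue it to $u$. After blow-up, $u$ is close to the affine map only in $L^\Phi$, so the constrained gluing is again unavailable. The oscillating tangential perturbation has to be built on $u$ itself, so that the sequence converges uniformly to $u$ and is admissible in \eqref{IKunif}.

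Your lower bound is in the spirit of the paper's, but making it rigorous requires the following steps, as in the paper:
\begin{itemize}
\item Rewrite $f(x,\nabla u_k)=\bar f(x,u_k,\nabla u_k)$ with $\bar f(x,s,\xi)=f(x,\mathbf P_s\xi)+\Phi(|\xi-\mathbf P_s\xi|)$.
\item Use the Orlicz decomposition lemma \cite{KoZa2017} to obtain equi-integrability of $\Phi(|\nabla \bar v_k|)$.
\item Use a Scorza--Dragoni truncation to freeze $s=s_0$.
\item Apply the unconstrained Theorem \ref{intrep} to $\bar f(\cdot,s_0,\cdot)$, and conclude with the identity $\bar f_{\rm hom}(s_0,\cdot)=Tf_{\rm hom}(s_0,\cdot)$ on tangent matrices.
\end{itemize}
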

	Secondly, Theorem \ref{casopm1} (clearly also Theorem \ref{c1eq3Ma}) leads to the following corollary in the homogeneous setting, i.e. if one considers the integrand $f$ in \eqref{functionalFe} not depending on the first variable, it is possible to extend to the Orlicz setting the  relaxation result obtained in \cite{DFMT}.
	\begin{thm}\label{c1eq6Ma}
		Let $\Phi$ be a Young function of class $\Delta_{2}\cap\nabla_{2}$, $\mathcal{M}$ be a connected smooth submanifold of $\mathbb{R}^{d}$ without boundary. Let $\omega\subset \mathbb R^N$ be a bounded open set with Lipschitz boundary and $g : \mathbb{R}^{d\times N} \to [0,\infty)$  be a continuous function satisfying $(H_{2}')$. Then the functional $\mathcal G: W^{1}L^\Phi(\omega;\mathbb R^d) \to [0,+\infty)$ defined by 
		\begin{eqnarray}
			\mathcal{G}(u) := \inf_{\{u_{n}\}} \bigg\{\liminf_{n\to\infty} \int_{\Omega} g(\nabla u_{n}) dx \, :\, u_{n} \rightharpoonup u\, \mbox{in }W^{1}L^{\Phi}(\omega; \mathbb{R}^{d})\mbox{-weakly}, 
			u_{n}(x) \in \mathcal{M}, \, \mbox{a.e. } x\in \omega \bigg\}  \nonumber
		\end{eqnarray}
		coincides with the functional $J: W^{1}L^\Phi(\omega;\mathbb R^d) \to [0,+\infty)$, defined as  
		\begin{equation*}
			\mathcal{J}(u) := \int_{\omega} \mathcal{Q}_{T}g(u, \nabla u) dx,
		\end{equation*}
		where the tangential quasiconvexification $\mathcal{Q}_{T}g$ of $g$ is defined, for all $s\in\mathcal{M}$ and $\xi \in [T_{s}(\mathcal{M})]^{N}$, by 
		\begin{equation}\label{c1eq9Ma}
			\mathcal{Q}_{T}g(s, \xi) :=  \inf \left\{\dashint_{Q} g(\xi + \nabla \varphi(y)) dy\, : \, \varphi \in \mathcal{C}_{c}^{\infty}(Q; T_{s}(\mathcal{M})) \right\}.
		\end{equation}
	\end{thm}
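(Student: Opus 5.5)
The plan is to view Theorem \ref{c1eq6Ma} as the special case of Theorem \ref{c1eq3Ma} (equivalently, of Theorem \ref{casopm1} in general dimensions without dimension reduction) in which $f(y,\xi)=g(\xi)$ does not depend on $y$. Such an $f$ is trivially $1$-periodic and Carathéodory, since $g$ is continuous, so (H1') holds; (H2') is assumed. Theorem \ref{c1eq3Ma} then tells us that the functionals $\mathcal F_\varepsilon(u)=\int_\omega g(\nabla u)\,dx$ (extended by $+\infty$ off $W^1L^\Phi(\omega;\mathcal M)$) $\Gamma$-converge in $L^\Phi$ to $\int_\omega Tg_{\rm hom}(u,\nabla u)\,dx$. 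Because the sequence does not depend on $\varepsilon$, its $\Gamma$-limit is just the $L^\Phi$-lower semicontinuous envelope of $\mathcal F:=\mathcal F_\varepsilon$. Two points then remain: that this envelope coincides with $\mathcal G$, which is defined through weak $W^1L^\Phi$ convergence, and that $Tg_{\rm hom}=\mathcal Q_Tg$.

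\textbf{Step 1: identification of the topologies.} For $u\in W^1L^\Phi(\omega;\mathcal M)$ I will show that $\mathcal G(u)$ equals the $L^\Phi$-relaxation of $\mathcal F$.
\begin{itemize}
\item Suppose $u_n\to u$ in $L^\Phi$ with $\sup_n\mathcal F(u_n)<\infty$. The coercivity $\alpha\Phi(|\xi|)\le g(\xi)$ bounds $\int_\omega\Phi(|\nabla u_n|)\,dx$. Using the $\Delta_2\cap\nabla_2$ condition (reflexivity of $L^\Phi$), a subsequence converges weakly in $W^1L^\Phi$. Hence the relaxation is $\ge\mathcal G$.
\item Conversely, a sequence converging weakly in $W^1L^\Phi$ converges strongly in $L^\Phi$. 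This uses the compact embedding $W^1L^\Phi\hookrightarrow L^\Phi$ on the Lipschitz domain $\omega$; since $\mathcal M$ is bounded, $L^1$ convergence together with a uniform $L^\infty$ bound suffices, or one can invoke $\Delta_2$ directly. Hence the relaxation is $\le\mathcal G$.
\end{itemize}
The statement writes $\mathcal G$ on all of $W^1L^\Phi(\omega;\mathbb R^d)$. Off $\mathcal M$-valued maps, $\mathcal G=+\infty$, because a.e. limits of $\mathcal M$-valued maps are $\mathcal M$-valued. I read the formula for $\mathcal J$ as meant for $u$ with values in $\mathcal M$, where $\nabla u\in[T_u\mathcal M]^N$ a.e.

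\textbf{Step 2: $Tg_{\rm hom}=\mathcal Q_Tg$.} Fix $s\in\mathcal M$ and $\xi\in[T_s\mathcal M]^N$. Because $g$ is homogeneous, rescaling $(0,t)^N$ to the unit cube turns the $t$-problem into the problem on $(0,1)^N$ with the same infimum, so $Tg_{\rm hom}(s,\xi)$ is the infimum of $\dashint_{Q} g(\xi+\nabla\varphi)$ over $\varphi\in W^1_0L^\Phi(Q;T_s\mathcal M)$. Translating the cube does not affect this. It remains to replace $W^1_0L^\Phi$ by $\mathcal C^\infty_c$:
\begin{itemize}
\item Smooth compactly supported functions are norm-dense in $W^1_0L^\Phi$ under $\Delta_2$, via mollification after a slight dilation inward.
\item The functional $\varphi\mapsto\int g(\xi+\nabla\varphi)$ is continuous along such approximations. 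This follows from the growth bound $g\le\beta(1+\Phi)$, $\Delta_2$, and Vitali's theorem, since modular convergence is equivalent to norm convergence under $\Delta_2$. Note that continuity of $g$ is needed here, which is why $g$ is assumed continuous.
\item Mollification preserves the linear subspace $T_s\mathcal M$, so the approximants stay $T_s\mathcal M$-valued.
\end{itemize}

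\textbf{Main obstacle.} The hard step is the density and continuity argument in Step 2 in the Orlicz setting. There I will rely on the $\Delta_2$ property to turn modular convergence into norm convergence, and to obtain equi-integrability of $\Phi(|\nabla\varphi_k|)$ when passing to the limit in $g$.
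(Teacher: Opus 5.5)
Your proposal is correct and follows the paper's route. The paper obtains Theorem \ref{c1eq6Ma} as a straightforward specialization of Theorem \ref{c1eq3Ma} (hence of Theorem \ref{casopm1}) to an integrand independent of $y$. Its remark that \eqref{c1eq9Ma} is unchanged when $\mathcal C^\infty_c$ is replaced by $W^1_0L^\Phi$ is your Step 2, and your Step 1 spells out the weak-$W^1L^\Phi$ versus strong-$L^\Phi$ identification that the paper leaves implicit. One slip: your aside that $\mathcal M$ is bounded is not assumed, and the paper truncates with $\mathcal M\cap B^3(0,R)$ precisely because $\mathcal M$ may be noncompact; however, the compact embedding of Theorem \ref{thmcompact} already gives the strong $L^\Phi$ convergence you need.
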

	\begin{rem}
		\begin{itemize}
        \item Theorem \ref{casopm1} and (its two corollaries above stated) are an extension to Orlicz-Sobolev spaces of \cite[Theorem 1.1]{ELTZ}. 
        
       \item Note that, as in \cite{tacha7}, \eqref{c1eq9Ma} remains true if we replace $\mathcal{C}_{c}^{\infty}(Q; T_{s}(\mathcal{M}))$ by $W^{1}_{0}L^{\Phi}(Q; T_{s}(\mathcal{M}))$.
       \end{itemize}
	\end{rem}
	\color{black}

	The paper is divided into sections each revolving around a specific aspect: Section \ref{sect2M} dwells on notation and preliminary results.  
    Section \ref{mainsect} is devoted to the simultaneous dimension reduction and homogenization in the unconstrained setting.  
    Section \ref{subsect2M} deals with the energy densities appearing in Theorems \ref{casopm1}, \ref{c1eq3Ma} and \ref{c1eq6Ma}, i.e. the properties of the energy density $Tf^0_{\textup{hom}}$ (as a byproduct also for $Tf_{\textup{hom}}$ and $\mathcal{Q}_{T}g$).
    
    The proof of Theorem \ref{casopm1} is then contained in Section \ref{sect3M}. 
	\color{black}
    \section{Notation and preliminary results }\label{sect2M}

	
	\begin{itemize}
		\item Otherwise unless differently specified, $\omega$ is an open bounded subset of $\mathbb{R}^{2}$ with Lipschitz boundary.
		\item For any given open set $U \subset \mathbb R^N$, $\mathcal{A}(U)$ denotes the family of all open subsets of $U$.
        \item For every open sets $C$ and $A$, $C \Subset A$, means that $\overline C \subset A$;
		\item $B^{N}(s, r)$ is the closed ball in $\mathbb{R}^{N}$ of center $s \in \mathbb{R}^{N}$ and radius $r>0$, when the superscript $N$ is omitted, $B(s,r)$ will denote an open ball of $\mathbb R^2$.
		\item  $\mathcal{L}^{N}$ is the Lebesgue measure in $\mathbb{R}^{N}$.
		\item  $\mathbb{R}^{d\times N}$ is identified with the set of real $d\times N$ matrices.
		\item $Q := (-\frac{1}{2},\frac{1}{2})^{N}$ is the unit cube in $\mathbb{R}^{N}$ and $Q(x_{0},\rho) := x_{0} + \rho Q$ with $x_{0} \in \mathbb{R}^{N}$, $\rho >0$. 
        \item  We also denote by $Q'$ the square $(-\frac{1}{2}, \frac{1}{2})^2$. 
       
		\item The symbol $\dashint_{A}$ stands for the average $\mathcal{L}^{N}(A)^{-1}\int_{A}$.
 \item Given a set $A\subset\omega$ we denote by $A_{,\varepsilon}$, with $\varepsilon>0$, the set $A\times (-\frac{\varepsilon}{2}, \frac{\varepsilon}{2})$ and so $A_{,1}:=A\times(-\frac{1}{2}, \frac{1}{2})$. In particular, $\Omega=\omega_{,1}$.
\item  $Q'_{,\varepsilon}=(-\frac{1}{2}, \frac{1}{2})^2\times (-\frac{\varepsilon}{2}, \frac{\varepsilon}{2})$ for $\varepsilon>0$, while $Q'_{,1}=(-\frac{1}{2}, \frac{1}{2})^3=Q$. 
        \item  $\mathcal{C}_{c}(U)$  is the space of continuous  functions $f: U\to \mathbb{R}$ with compact support.
		\item  $\mathcal{C}_{0}(U)$ is the closure of $\mathcal{C}_{c}(U)$ for the uniform convergence, it coincides with the space of all continuous functions $f: U\to \mathbb{R}$ such that, for every $\eta>0$, there exists a compact set $K_{\eta}\subset U$ with $|f|< \eta$ on $\Omega\backslash K_{\eta}$.
		\item  $\mathcal{M}(U)$ is the space of real-valued Radon measures with finite total variation. We recall that by the Riesz Representation Theorem $\mathcal{M}(U)$ can be identified with the dual space of $\mathcal{C}_{0}(U)$ through the duality 
		\begin{equation*}
			\langle \mu, \phi\rangle = \int_U \phi d\mu, \quad \mu \in \mathcal{M}(U), \;\; \phi \in \mathcal{C}_{0}(U).
		\end{equation*}
		\item  $\mathcal{M}^{+}(U)$ is the subset of $\mathcal{M}(U)$ of positive measures.
		\item If $\mu \in \mathcal{M}(U)$ and $\lambda \in \mathcal{M}^{+}(U)$, we denote by $\frac{d \mu}{d \lambda}$ the Radon-Nikod\'{y}m derivative of $\mu$ with respect to $\lambda$, and by \cite[Proposition 2.2]{AFP} there exists a Borel set $E$ such that $\lambda(E) = 0$ and 
		\begin{equation*}
			\dfrac{d \mu}{d \lambda}(x) = \lim_{\rho\to 0^{+}} \dfrac{\mu(Q(x,\rho))}{\lambda(Q(x,\rho))}
		\end{equation*}
		for all $x\in \mbox{Supp }\mu \, \backslash\, E$.
		\item $\Phi: [0,\infty)\to[0,\infty)$ is a Young function, i.e. $\Phi$ is continuous, convex, with $\Phi(t)>0$ for $t>0$, $\frac{\Phi(t)}{t}\to 0$ as $t\to 0$, and $\frac{\Phi(t)}{t}\to \infty$ as $t\to \infty$.
		\item $\widetilde{\Phi}$ stands for the complementary of Young function $\Phi$, defined by 
		\begin{equation*}
			\widetilde{\Phi}(t)= \sup_{s\geq 0} \big\{st - \Phi(s), \; t\geq 0 \big\}.
		\end{equation*} 
		\item We recall that a Young function $\Phi$ is of class $\Delta_{2}$ at $\infty$ (denoted $\Phi\in \Delta_{2}$) if there are $\alpha>0$ and $t_{0}\geq 0$ such that 
		\begin{equation}\label{Delta2alpha}
			\Phi(2t) \leq \alpha \Phi(t),\; \textup{for\,all}\, t\geq t_{0}.
		\end{equation}
		Also $\Phi$ is of class $\nabla_2$ if $\tilde \Phi$ is of class $\Delta_2$, i.e. $\exists \beta >0$ and $t_0 >1$ such that  
		\begin{equation}\label{nabla2beta}\Phi(t) \leq \frac{1}{2 \beta} \Phi(\beta t), \hbox{ for all }t \geq t_0.
		\end{equation}
		
		\item $L^{\Phi}(U;\mathbb{R}^{d})$ is the Orlicz space of functions defined by 
		\begin{equation*}
			L^{\Phi}(\Omega;\mathbb{R}^{d}) = \left\{ u: \Omega\to \mathbb{R}^{d}\,;\, u\,\textup{is\,measurable},\; \lim_{\alpha\to 0}\int_{U}\Phi(\alpha|u(x)|)dx = 0 \right\}.
		\end{equation*}
		We recall that $L^{\Phi}(U;\mathbb{R}^{d})$ is a Banach space with respect to the Luxemburg norm 
		\begin{equation*}
			\|u\|_{\Phi} = \inf \left\{ k>0\,:\, \int_{U}\Phi\left(\dfrac{|u(x)|}{k}\right)dx \leq 1 \right\}\, < \, +\infty.
		\end{equation*}
		We will denote the norm of elements in $L^{\Phi}(U;\mathbb{R}^{d})$, both by  $\|\cdot\|_{\Phi}$ and $\|\cdot\|_{L^{\Phi}}$.
		\item   $\mathcal{D}(U)$ (or $C^\infty_c(U)$) is the space of  indefinitely differentiable functions $f: U\to \mathbb{R}^{d}$ with compact support. We recall that $\mathcal{D}(U)$ is dense in $L^{\Phi}(U;\mathbb{R}^{d})$, $L^{\Phi}(U;\mathbb{R}^{d})$ is separable and reflexive when $\Phi, \in \Delta_{2}\cap \nabla_2$.  The dual of $L^{\Phi}(U;\mathbb{R}^{d})$ is identified with $L^{\widetilde{\Phi}}(U;\mathbb{R}^{d})$. The property $\frac{\Phi(t)}{t}\to \infty$ as $t\to \infty$ implies that 
		\begin{equation*}
			L^{\Phi}(U;\mathbb{R}^{d}) \subset L^{1}(U;\mathbb{R}^{d}) \subset L^{1}_{loc}(U;\mathbb{R}^{d}) \subset \mathcal{D}'(U),
		\end{equation*}
		each embedding being continuous.
		\item $W^{1}L^{\Phi}(U,\mathbb{R}^{d})$ is the Orlicz-Sobolev space defined by
		\begin{equation*}
			W^{1}L^{\Phi}(U,\mathbb{R}^{d}) = \left\{ u \in L^{\Phi}(U;\mathbb{R}^{d})\,;\, \dfrac{\partial u}{\partial x_{i}} \in L^{\Phi}(U;\mathbb{R}^{d}),\, 1\leq i\leq N \right\},
		\end{equation*}
		where derivatives are taken in the distributional sense on $U$. Endowed with the norm 
		\begin{equation*}
			\|u\|_{W^{1}L^{\Phi}} = \|u\|_{L^{\Phi}} + \sum_{i=1}^{N} \left\|\dfrac{\partial u}{\partial x_{i}} \right\|_{L^{\Phi}}, \; u \in W^{1}L^{\Phi}(U,\mathbb{R}^{d}),
		\end{equation*}
		$W^{1}L^{\Phi}(U,\mathbb{R}^{d})$ is a reflexive Banach space when $\Phi \in \Delta_{2}\cap \nabla_2$. 
		\item $W^{1}_{0}L^{\Phi}(U,\mathbb{R}^{d})$ denotes the closure of $\mathcal{D}(U)$ in $W^{1}L^{\Phi}(U,\mathbb{R}^{d})$ and the semi-norm 
		\begin{equation*}
			u \longrightarrow \|u\|_{W^{1}_{0}L^{\Phi}} = \|Du\|_{L^{\Phi}} =  \sum_{i=1}^{N} \left\|\dfrac{\partial u}{\partial x_{i}} \right\|_{L^{\Phi}}
		\end{equation*}
		is a norm on $W^{1}_{0}L^{\Phi}(U,\mathbb{R}^{d})$ equivalent to $\|\cdot\|_{W^{1}L^{\Phi}}$.
        For other results concerning functional properties of the above spaces we refer to \cite{tacha9} and the bibliography contained therein.
        
	\end{itemize}

  We also recall the following result (see \cite[Theorem 2.4]{focar1})
\begin{thm}\label{thmcompact}
Let $\Omega$ be a bounded open set with Lisphitz boundary and let $\Phi$ be an Orlicz function satisfying $\Delta_2$ condition, then  the embedding
$W^1L^\Phi(\Omega)\hookrightarrow L^\phi(\Omega)$ is compact.
\end{thm}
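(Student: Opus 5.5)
Since $\Omega$ is a bounded Lipschitz domain, I plan to reduce the statement to two facts: a \emph{uniform} Rellich-type compactness for $W^{1,1}$ functions, and a modular estimate in which $\Phi$ is compared with itself through a Poincaré-type argument. The difficulty is that the $L^\Phi$ topology is not controlled by $L^1$ convergence alone, so I have to show that a bounded set $\mathcal B\subset W^1L^\Phi(\Omega)$ is totally bounded in $L^\Phi(\Omega)$.

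First I extend. Using a standard extension operator for Lipschitz domains (reflection across local Lipschitz graphs and a partition of unity), each $u\in W^1L^\Phi(\Omega)$ extends to $Eu\in W^1L^\Phi(\mathbb R^N)$ with compact support in a fixed ball $B$. The operator is built from bi-Lipschitz changes of variables and multiplication by smooth cut-offs, and modulars are stable under both up to constants. Using $\Delta_2$ to absorb those constants, I get $\|Eu\|_{W^1L^\Phi}\le C\|u\|_{W^1L^\Phi}$.

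Next I mollify. Set $u_h=\rho_h * Eu$. Translation gives $Eu(x+z)-Eu(x)=\int_0^1 \nabla Eu(x+tz)\cdot z\,dt$. By Jensen's inequality for the convex function $\Phi$ and invariance of the measure under translation,
\[
\int\Phi\bigl(|Eu(x+z)-Eu(x)|/\lambda\bigr)dx\le\int\Phi\bigl(|z||\nabla Eu|/\lambda\bigr)dx .
\]
Hence $\|\tau_z Eu-Eu\|_\Phi\le |z|\,\|\nabla Eu\|_\Phi$, which is uniform on $\mathcal B$. Averaging against $\rho_h$ (Jensen again, i.e. the Minkowski inequality for the Luxemburg norm) yields $\|u_h-Eu\|_\Phi\le h\,C$ uniformly on $\mathcal B$.

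For fixed $h$, the family $\{u_h\}$ is uniformly bounded and equicontinuous on $\overline B$. Indeed, $|u_h|+|\nabla u_h|\le \|\rho_h\|_{L^{\tilde\Phi}}\|Eu\|_{L^\Phi}$ by Hölder's inequality in Orlicz spaces. By Arzelà–Ascoli the family is therefore precompact in $C(\overline B)$, hence in $L^\Phi(B)$, since uniform convergence on a bounded set implies Luxemburg convergence. A standard $\varepsilon/3$ argument then shows that $\mathcal B$ is totally bounded in $L^\Phi(\Omega)$.

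I expect the main obstacle to be the extension step, specifically checking the norm bound for reflections in Orlicz norms. If this becomes delicate, I would instead localize with a Lipschitz cut-off and treat each boundary chart separately, applying the translation estimate only in inward directions, which are admissible in a Lipschitz graph domain. Notably, $\nabla_2$ is not needed; $\Delta_2$ serves only to keep the constants under control.
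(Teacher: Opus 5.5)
Your proof is correct, and it differs from the paper's treatment for a simple reason: the paper gives no argument at all and just quotes the statement from \cite[Theorem 2.4]{focar1}. (The target space $L^\phi$ there is a misprint for $L^\Phi$, which is how you read it.) What you supply is a self-contained Fr\'echet--Kolmogorov proof, and each step checks out:
\begin{itemize}
\item extension to a fixed ball;
\item the Jensen estimate $\int\Phi(|Eu(x+z)-Eu(x)|/\lambda)\,dx\le\int\Phi(|z||\nabla Eu|/\lambda)\,dx$, which yields $\|\rho_h*Eu-Eu\|_{\Phi}\le h\|\nabla Eu\|_{\Phi}$;
\item Arzel\`a--Ascoli for fixed $h$, together with $\|g\|_{L^\Phi(B)}\le\|g\|_{\infty}/\Phi^{-1}(1/\mathcal{L}^N(B))$.
\end{itemize}

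What your route buys is transparency about hypotheses. Every step uses only convexity of $\Phi$ and the inclusion $L^\Phi\subset L^1$ on bounded sets, so $\Delta_2$ is just as superfluous as $\nabla_2$. Your remark that $\Delta_2$ is needed to absorb constants in the extension is therefore inaccurate. If $T$ is bi-Lipschitz and the Jacobian determinant of $T^{-1}$ is bounded by $J\ge1$, convexity gives $\Phi(t/J)\le\Phi(t)/J$. Hence $\|g\circ T\|_{\Phi}\le J\|g\|_{\Phi}$ directly for Luxemburg norms, and the gradient picks up an extra factor $\mathrm{Lip}(T)$. What the citation buys is brevity. The general Orlicz--Sobolev embedding theory behind such results also gives more: compactness into every Orlicz space growing essentially more slowly than the Sobolev conjugate of $\Phi$. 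The paper does not need that.

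Some small repairs:
\begin{itemize}
\item The identity $Eu(x+z)-Eu(x)=\int_0^1\nabla Eu(x+tz)\cdot z\,dt$ holds only a.e.\ for a good representative. It is cleaner to prove the modular estimate for $Eu*\rho_\delta$, using $\int\Phi(c|\nabla Eu*\rho_\delta|)\le\int\Phi(c|\nabla Eu|)$ (Jensen), and then let $\delta\to0$ via Fatou. Again, no $\Delta_2$ is used.
\item For fixed $h$ the gradient bound should read $|\nabla u_h|\le 2\|\nabla\rho_h\|_{L^{\widetilde\Phi}}\|Eu\|_{L^\Phi}$, where the factor $2$ comes from H\"older with two Luxemburg norms. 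Simpler still, use $|\nabla u_h|\le\|\nabla\rho_h\|_{\infty}\|Eu\|_{L^1}$.
\item Choose $B$ to contain the supports of all $u_h$ with $h\le1$.
\item Your first paragraph announces a different plan ($W^{1,1}$-Rellich plus a Poincar\'e-type modular comparison) from the one you carry out. That plan would need an extra uniform $\Phi$-integrability ingredient to upgrade $L^1$ to $L^\Phi$ convergence. Your actual argument bypasses this, so drop that sentence.
\end{itemize}
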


    \subsection{$\Gamma$-convergence}
We recall the definition and main properties of $\Gamma$-convergence. For a deeper overview of this topic, we refer to \cite{DMbook}. 
The following is a definition in the metric setting, which is sufficient for our purposes. 
\begin{defn}[\cite{DMbook}, Proposition $8.1$]
    Let $(X,d)$ be a metric space. $\forall k \in \mathbb N$ let  $F_k: X \rightarrow \mathbb R \cup \{+\infty\}$, be a functional. Then $\{ F_k\}_k$ $\Gamma$-converges to $F: X \rightarrow \mathbb R \cup \{+\infty\}$ if 
    \begin{itemize}
        \item[$(i)$] ($\Gamma$-liminf inequality) for every $ x \in X$ and every sequence $\{x_k\}$ such that $ d(x_k,x) \to 0$ as $k\to +\infty$, it is
        \[ F(x) \le \liminf_{k \rightarrow \infty} F_k(x_k); \]
        \item[$(ii)$]($\Gamma$-limsup inequality: existence of a recovery sequence) for every $x \in X$, there exists a sequence $\{x_k\}$ such that $d(x_k,x)\to 0$ as $k\to +\infty$, such that
        \[ F(x) = \lim_{k \rightarrow \infty} F_k(x_k).\]
    \end{itemize}
\end{defn}

In fact, we write
\[
F(x):=\inf\{\liminf F_k(x_k): d(x_k,x)\to 0, \hbox{ as }k \to +\infty\}.
\]
A fundamental property we want to underline is that the $\Gamma$-limit is lower semi-continuous with respect to the convergence induced by $d$, see \cite[Proposition 6.8]{DMbook}.

We also provide the definition for $\Gamma$-convergence for a family of functionals.
\begin{defn}\label{defGammafamilies}
    Let $(X,d)$ be a metric space. For a positive parameter $\varepsilon$, we say that a family $\{ F_{\varepsilon}\}_{\varepsilon}$ of functionals, with $F_\varepsilon: X \rightarrow \mathbb R \cup \{+\infty\}, \Gamma$-converges to $F: X \rightarrow \mathbb R \cup \{+\infty\}$, with respect to the metric $d$ as $\varepsilon \rightarrow 0^+$, if for all vanishing sequences $\{\varepsilon_k\}$,  $\{F_{\varepsilon_k}\}_k $ $\Gamma$-converges to $F: X \rightarrow \mathbb R \cup \{+\infty\}$, when $k \rightarrow \infty$.
\end{defn}
We will write, as for the case of sequences,
\[
F(x):=\inf\{\liminf F_\varepsilon(x_\varepsilon): d(x_\varepsilon,x)\to 0, \hbox{ as }\varepsilon \to 0\}.
\]





	\section{Dimension reduction and homogenization in the unconstrained Sobolev-Orlicz setting}\label{mainsect}
    
    This section is devoted to the proof of integral representation results for the $\Gamma$-limit of generic integral functionals in $W^{1}L^\Phi(\Omega;\mathbb R^d)$ defined in thin $3D$ domains, $\Phi$ being a Young functions satisfying $\Delta_2$ and $\nabla_2$ conditions. 

Adopting the same notation as in Section 1, for $\Omega$ and the rescaled gradients, assume that, for every $\varepsilon>0$, the functions $W_\varepsilon: \Omega \times \mathbb R^{3\times 3}\to \mathbb R$ are Carath\'eodory ones satisfying
\begin{equation}\label{growthW}
 \beta' \Phi(\xi)-\frac{1}{\beta'}\leq W_\varepsilon(x_\alpha, x_3, \xi) \leq \beta (1+\Phi(\xi))
\end{equation}
for certain $0<\beta'<\beta$ and every $\xi \in \mathbb R^{3\times 3}$.

Introduce the functionals
    \begin{equation}
\label{functionalfree}
    E_\varepsilon(u) := \left \{
\begin{array}{lll}
\!\!\!\!\!\! & \displaystyle \int_{\Omega} W_\varepsilon \left (x_{\alpha}, x_3, \nabla_\varepsilon u\right ) \, dx \qquad & \textnormal{if $u \in W^{1}L^{\Phi}(\Omega; \mathbb R^3)$}\\[4mm]
\!\!\!\!\!\! & + \infty &\textnormal{elsewhere}
\end{array}
\right.
\end{equation}

    We can define, for any sequence $\{\varepsilon\}$
    \begin{align}\label{freeGammaliminf}
    J_{\{\varepsilon\}}(u;A):=\inf\left\{\liminf_{\varepsilon \to 0}E_\varepsilon(u_\varepsilon): W^1L^\Phi(\Omega;\mathbb R^3) \ni u_\varepsilon \to u \hbox{ in }L^\Phi(\Omega;\mathbb R^3)\right\}.
    \end{align}

    We start observing that, arguing as in the classical $p$-growth case, exploiting \eqref{growthW}, one can prove that energy bounded sequences $\{u_\varepsilon\}_\varepsilon$, in view of Theorem \ref{thmcompact} converge, up to a not relabeled subsequence, to $u \in W^1L^\Phi(\Omega;\mathbb R^3)$, such that $\partial_3 u=0$, i.e. $u$ can be identified with an element, still denoted in the same way, with an abuse of notation, $u \in W^1L^\Phi(\omega;\mathbb R^d)$. 
Then the following integral representation holds. 
\begin{thm}\label{thm2.5BFF}
For any $u \in W^1L^\Phi(\omega;\mathbb R^3)$, any subset $A \subset \Omega$, and any decreasing sequence $\varepsilon \to 0$, there exists a subsequence, still denoted by $\{\varepsilon\}$ for which the $\Gamma$-limit exists and 
\[
J_{\{\varepsilon\}} (u;A)=\int_A W_{\{\varepsilon\}}(x_\alpha; \nabla_\alpha u)dx_\alpha,
\]
for a suitable Carath\'eodory function $W_{\{\varepsilon\}}$, such that $W_{\{\varepsilon\}}(x_\alpha, \cdot)$ is quasiconvex for $\mathcal L^2$-a.e. $x_\alpha \in \omega$.
\end{thm}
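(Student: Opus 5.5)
The plan follows the localization method of Braides--Fonseca--Francfort \cite{BFF}, adapted to the Orlicz setting, and then invokes an abstract integral representation theorem of Buttazzo--Dal Maso type in the Orlicz version of \cite{AC2005} (or \cite{MM}).

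\textbf{Localization and compactness.} First we localize the functionals: for $A\in\mathcal A(\omega)$ set $E_\varepsilon(u;A):=\int_{A_{,1}}W_\varepsilon(x,\nabla_\varepsilon u)\,dx$ and define $J_{\{\varepsilon\}}(u;A)$ accordingly. Since $L^\Phi(\Omega;\mathbb R^3)$ is separable (because $\Phi\in\Delta_2$), the compactness property of $\Gamma$-convergence \cite[Theorem 8.5]{DMbook} applies. Combined with a diagonal argument over a countable dense family $\mathcal R$ of open sets (finite unions of rectangles with rational vertices), it yields a subsequence along which the $\Gamma$-limit $J_{\{\varepsilon\}}(u;A)$ exists for all $A\in\mathcal R$. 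We then extend it to all open $A$ by inner regularity, $J(u;A):=\sup\{J(u;A'):A'\Subset A,\ A'\in\mathcal R\}$, and check that the $\Gamma$-limit exists for every $A$.

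\textbf{Measure property.} The heart of the proof is to show that $J(u;\cdot)$ is the trace on open sets of a Borel measure. This rests on the De Giorgi--Letta criterion, for which we must verify the following.
\begin{itemize}
\item[(a)] The growth bound $J(u;A)\le\beta\int_{A}(1+\Phi(|\nabla_\alpha u|))\,dx_\alpha$, which follows by taking the constant-in-$x_3$ sequence $u_\varepsilon=u$, since then $\nabla_\varepsilon u=(\nabla_\alpha u|0)$.
\item[(b)] Subadditivity and inner regularity, obtained via the fundamental estimate.
\end{itemize}

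\textbf{Fundamental estimate (main obstacle).} The fundamental estimate in the Orlicz setting is where I expect the main difficulty. Let $A'\Subset A$ and $B$ be open, and take cut-offs $\varphi_i$ depending only on $x_\alpha$, placed on layers between $A'$ and $A$. Setting $w_\varepsilon=\varphi_i u_\varepsilon+(1-\varphi_i)v_\varepsilon$, we get $\nabla_\varepsilon w_\varepsilon=\varphi_i\nabla_\varepsilon u_\varepsilon+(1-\varphi_i)\nabla_\varepsilon v_\varepsilon+\nabla\varphi_i\otimes(u_\varepsilon-v_\varepsilon)$. The third component of this gradient carries no cut-off derivative, which is exactly why the cut-offs are chosen independent of $x_3$. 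To control the energy we use convexity of $\Phi$ together with the $\Delta_2$ condition, $\Phi(a+b+c)\le C(\Phi(a)+\Phi(b)+\Phi(c))$, and absorb the term $\Phi(|\nabla\varphi_i||u_\varepsilon-v_\varepsilon|)$. Because $\|\nabla\varphi_i\|_\infty$ scales like $M/\delta$, a second application of $\Delta_2$ gives $\Phi(M t/\delta)\le C_{M,\delta}\Phi(t)$. This term therefore vanishes as $u_\varepsilon-v_\varepsilon\to0$ in $L^\Phi$: by $\Delta_2$, norm convergence is equivalent to modular convergence. Averaging over the $M$ layers then gives the usual $1/M$ smallness. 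Equi-integrability of $\Phi(|\nabla_\varepsilon u_\varepsilon|)$ is not needed for this step.

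\textbf{Representation and quasiconvexity.} With the measure property in hand, we establish the remaining hypotheses of the abstract representation theorem.
\begin{itemize}
\item[(c)] Locality: $J(u;A)=J(v;A)$ if $u=v$ a.e.\ on $A$.
\item[(d)] Lower semicontinuity in $L^\Phi$, which holds since $\Gamma$-limits are lower semicontinuous.
\item[(e)] The growth in (a).
\item[(f)] Invariance under addition of constants, $J(u+c;A)=J(u;A)$, which is immediate because $W_\varepsilon$ depends only on the gradient.
\end{itemize}
The abstract theorem \cite{AC2005,MM} (in the Orlicz version, in the spirit of Buttazzo--Dal Maso) then yields $J(u;A)=\int_A W_{\{\varepsilon\}}(x_\alpha,\nabla_\alpha u)\,dx_\alpha$. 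Here $W_{\{\varepsilon\}}$ is obtained by blow-up on affine maps,
\[
W_{\{\varepsilon\}}(x_0,\xi_\alpha)=\limsup_{\rho\to0}\frac{J(\xi_\alpha x_\alpha;Q'(x_0,\rho))}{\rho^2}.
\]
Carathéodory measurability of $W_{\{\varepsilon\}}$ and the growth $0\le W_{\{\varepsilon\}}\le\beta(1+\Phi)$ follow from this formula. Finally, quasiconvexity of $W_{\{\varepsilon\}}(x_\alpha,\cdot)$ follows from the weak lower semicontinuity of $J(\cdot;A)$ in $W^1L^\Phi(\omega)$, since strong $L^\Phi$ convergence is implied by weak $W^1L^\Phi$ convergence by Theorem \ref{thmcompact}, together with the standard characterization that an integrand with $\Phi$-growth is quasiconvex if and only if the functional is sequentially weakly lower semicontinuous. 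In the Orlicz setting this characterization can be checked directly by testing with periodic perturbations $\xi_\alpha x_\alpha+\frac1k\phi(kx_\alpha)$, which converge weakly to the affine map.
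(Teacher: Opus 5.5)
Your proposal is correct. It has the same architecture as the paper: compactness of $\Gamma$-convergence, the measure property of $J_{\{\varepsilon\}}(u;\cdot)$, an abstract Buttazzo--Dal Maso type representation theorem as in \cite{MM}, and quasiconvexity from lower semicontinuity. The genuine difference is in the subadditivity step.

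The paper follows \cite{BFF}. It first proves Lemma \ref{prelim_lemma}, which gives recovery sequences equal to $u$ near the lateral boundary. It then glues the recovery sequences on $B_\delta$ and $D_\delta$ across a single level set $S^\delta_{\eta_0}$ that the weak-$*$ limit $\widehat\lambda$ of the combined energy densities does not charge. The cut-off has thickness $\zeta\to0$, so the gluing error is controlled by $\widehat\lambda(L^\delta_\zeta)\to0$.

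You instead average over $M$ layers with $x_3$-independent cut-offs. The gradient error is $C/M$ times the uniform energy bound, and the mixing term vanishes by norm convergence of $u_\varepsilon-v_\varepsilon$. This avoids both the lateral-boundary lemma and the auxiliary measures. The same estimate also gives inner regularity. Together with your countable family $\mathcal R$, it produces one subsequence along which the $\Gamma$-limit exists on every open set. The paper uses this tacitly when it compares $J_{\{\varepsilon\}}$ on $A$, $B$ and $A\setminus\overline C$, but does not prove it.

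In exchange, the paper's route yields recovery sequences with prescribed lateral values. That is plausibly the form needed for the comparison with the Dirichlet cell formula behind Theorem \ref{intrep}, as in \cite{BFF}.

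Two details need tidying.
\begin{itemize}
\item $\Phi$ is only $\Delta_2$ at infinity, so $\Phi(\lambda t)\le C_\lambda\Phi(t)$ can fail near $0$. Use instead that $\int\Phi(\lambda|g|)\,dx\le\lambda\|g\|_\Phi$ whenever $\lambda\|g\|_\Phi\le1$; this needs no $\Delta_2$.
\item \eqref{growthW} only gives $W_\varepsilon\ge-1/\beta'$. Add $1/\beta'$ before using nonnegativity, which you need for monotonicity and superadditivity in De Giorgi--Letta and for discarding energy outside the cut-off region. Correspondingly, the lower bound is $W_{\{\varepsilon\}}\ge-1/\beta'$, not $0$.
\end{itemize}
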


A related version of this theorem, dealing with oscillating profiles in the standard Sobolev setting has been obtained in \cite[Theorem 2.5]{BFF}. Indeed, we present below only the steps which really differ from the arguments therein.  

In particular, we start by a preliminary result which shows the existence of a recovery sequence for $J_{\{\varepsilon\}}(u)$ with the same lateral boundary data as $u$.

\begin{lem}\label{prelim_lemma}
	Let	$u \in W^1L^\Phi(A;\mathbb{R}^3)$ where $A$ is an open subset of $\omega$. If $v_{\varepsilon} \in W^1L^\Phi(A \times (\frac{-1}{2}, \frac{1}{2});\mathbb{R}^3)$ are such that 
	\begin{equation}\label{subseq_v_eps}
		\left\{\begin{array}{l}
			(v_{\varepsilon} - u)\longrightarrow 0 \text{ in } L^{\Phi}(A\times(-1/2,1/2); \mathbb{R}^{3}) \\
			J_{\{\varepsilon\}}(u; A)= \lim_{\varepsilon\to 0^+} E_{\varepsilon}(v_\varepsilon; A),
		\end{array}\right.
	\end{equation}
	then there exists a sequence $\{w_{\varepsilon}\}\subset W^1L^\Phi(A\times(\frac{-1}{2},\frac{1}{2}); \mathbb{R}^{3})$ which satisfies \eqref{subseq_v_eps} and is such that 
	\begin{equation*}
		w_{\varepsilon}= u \text{ in } \{(x_{\alpha}, x_{3}): \; x_{\alpha}\in  A\setminus K^\varepsilon \text{ and } |x_{3}|< \frac{1}{2}\}
	\end{equation*}
	for some compact set $K^{\varepsilon}\subset A$.
	\end{lem}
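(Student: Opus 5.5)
We follow the De Giorgi slicing (fundamental estimate) argument of \cite[Lemma 2.6]{BFF}, adapted to the Orlicz growth \eqref{growthW}. Since $v_\varepsilon\to u$ in $L^\Phi$ and $E_\varepsilon(v_\varepsilon;A)$ is bounded, the coercivity in \eqref{growthW} gives $\sup_\varepsilon\int_{A_{,1}}\Phi(|\nabla_\varepsilon v_\varepsilon|)\,dx<\infty$. Because $\Phi\in\Delta_2$, the measures $\mu_\varepsilon:=\big(1+\Phi(|\nabla_\varepsilon v_\varepsilon|)+\Phi(|\nabla_\alpha u|)\big)\mathcal L^3\llcorner A_{,1}$ are bounded, and up to a subsequence they converge weakly$^*$ to a Radon measure $\mu$. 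We fix $\delta>0$ and choose $A_\delta\Subset A$ with $\mu(A\setminus \overline{A_\delta})<\delta$ and $\mathcal L^2(A\setminus A_\delta)<\delta$ (here $A_{,1}$ is viewed in the $x_\alpha$ variable). Let $M\in\mathbb N$ and $\eta:=\mathrm{dist}(A_\delta,\partial A)$. We build nested sets $A_\delta=A_0\Subset A_1\Subset\dots\Subset A_M\Subset A$ and cut-offs $\varphi_i\in C_c^\infty(A_i;[0,1])$ depending only on $x_\alpha$, with $\varphi_i=1$ on $A_{i-1}$ and $|\nabla_\alpha\varphi_i|\le 2M/\eta$. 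We then set
\[
w_\varepsilon^{i}:=\varphi_i v_\varepsilon+(1-\varphi_i)u .
\]
Since $\varphi_i$ does not depend on $x_3$, we get $\nabla_\varepsilon w^i_\varepsilon=\varphi_i\nabla_\varepsilon v_\varepsilon+(1-\varphi_i)(\nabla_\alpha u|0)+(v_\varepsilon-u)\otimes\nabla_\alpha\varphi_i$. This is where the fact that $u$ does not depend on $x_3$ is used, so that $\frac1\varepsilon\partial_3 u=0$. Moreover $w^i_\varepsilon=u$ outside $K^\varepsilon:=\overline{A_i}$.

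Next comes the energy estimate. On $A_{i-1}$ we have $w^i_\varepsilon=v_\varepsilon$, and on $A\setminus \overline{A_i}$ we have $w^i_\varepsilon=u$. On the layer $S_i:=(A_i\setminus\overline{A_{i-1}})_{,1}$ we use the upper bound in \eqref{growthW} together with the convexity of $\Phi$ and the $\Delta_2$ condition, $\Phi(a+b+c)\le C(\Phi(a)+\Phi(b)+\Phi(c))$. This gives
\[
E_\varepsilon(w^i_\varepsilon;A)\le E_\varepsilon(v_\varepsilon;A)+C\int_{S_i}\big(1+\Phi(|\nabla_\varepsilon v_\varepsilon|)+\Phi(|\nabla_\alpha u|)\big)dx + C\int_{S_i}\Phi\big(\tfrac{2M}{\eta}|v_\varepsilon-u|\big)dx + C\,\mathcal L^2(A\setminus A_\delta).
\]
The last term comes from $W_\varepsilon(\cdot,\nabla_\alpha u|0)\le\beta(1+\Phi(|\nabla_\alpha u|))$ on $A\setminus A_M$ and is controlled by $\delta$ through the choice of $A_\delta$. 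We also need $W_\varepsilon\ge -1/\beta'$ to compare with $E_\varepsilon(v_\varepsilon;A)$ on the remaining set, and this negative part is again absorbed by the smallness of the measure of $A\setminus A_\delta$. Averaging over $i=1,\dots,M$, we pick for each $\varepsilon$ an index $i_\varepsilon$ such that the layer term is at most $\frac{C}{M}\mu_\varepsilon(A_{,1})$.

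The main obstacle is the term $\int\Phi(\frac{2M}{\eta}|v_\varepsilon-u|)$. In the $p$-growth case one simply factors out $(M/\eta)^p$. Here we use that, by $\Delta_2$, convergence in Luxemburg norm is equivalent to modular convergence for every multiple: $\|v_\varepsilon-u\|_{\Phi}\to0$ implies $\int\Phi(\lambda|v_\varepsilon-u|)\to0$ for every fixed $\lambda>0$. Iterating $\Phi(2t)\le\alpha\Phi(t)$ gives $\Phi(\lambda t)\le C_\lambda\Phi(t)$, with the usual care for small $t$, which is handled using boundedness of the domain and $t_0$. Hence for fixed $M,\delta$ this term vanishes as $\varepsilon\to0$.

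To conclude, we set $w_\varepsilon:=w^{i_\varepsilon}_\varepsilon$. Then $w_\varepsilon-u=\varphi_{i_\varepsilon}(v_\varepsilon-u)\to0$ in $L^\Phi$, and
\[
\limsup_\varepsilon E_\varepsilon(w_\varepsilon;A)\le J_{\{\varepsilon\}}(u;A)+\tfrac CM\sup_\varepsilon\mu_\varepsilon(A_{,1})+C\delta .
\]
Letting $M\to\infty$ and $\delta\to0$ and performing a diagonal extraction in $(M,\delta,\varepsilon)$, we obtain $\limsup E_\varepsilon(w_\varepsilon;A)\le J_{\{\varepsilon\}}(u;A)$. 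The reverse inequality holds because $J_{\{\varepsilon\}}$ is the $\Gamma$-liminf, i.e. by the definition \eqref{freeGammaliminf}. Therefore $\{w_\varepsilon\}$ satisfies \eqref{subseq_v_eps} and coincides with $u$ outside $K^\varepsilon\times(-\frac12,\frac12)$.
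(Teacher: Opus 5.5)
Your proof is correct. It uses the same mechanism as the paper: interpolation $\varphi v_\varepsilon+(1-\varphi)u$ with $x_3$-independent cut-offs, convexity and $\Delta_2$ on the transition layer, and averaging over layers. Where you differ is in how the parameters are chosen.

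The paper makes everything $\varepsilon$-dependent. The strip width $1/K(\varepsilon)$ is tied to the size of $v_\varepsilon-u$, and the number of strips is $M(\varepsilon)=[\sqrt{K(\varepsilon)}]$. A single construction then makes the boundary region $A(\varepsilon)$ negligible, makes the chosen strip carry energy at most $C/M(\varepsilon)$, and makes the cut-off term vanish, with no diagonal argument.

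You instead fix $\delta$ and $M$, so the cut-off gradients are bounded by the fixed constant $2M/\eta$. You get $\limsup_\varepsilon E_\varepsilon(w_\varepsilon;A)\le J_{\{\varepsilon\}}(u;A)+C/M+o_\delta(1)$ and then diagonalize. The gain is that you only need $\int\Phi(\lambda|v_\varepsilon-u|)\,dx\to0$ for a \emph{fixed} $\lambda$. The paper needs it for $\lambda=2K(\varepsilon)\to\infty$, which forces the growth of $K(\varepsilon)$ to be matched to the rate of convergence. With $K(\varepsilon)$ defined through the modular $\int\Phi(|v_\varepsilon-u|)$, as written there, the cut-off term need not vanish. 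For $\Phi(t)=t^p$ it is of order $(\int|v_\varepsilon-u|^p)^{1-p/2}$, which does not tend to zero when $p\ge 2$. It does vanish if $K(\varepsilon)$ is defined through $\|v_\varepsilon-u\|_\Phi^{-1/2}$, because $\int\Phi(\lambda|g|)\,dx\le\lambda\|g\|_\Phi$ whenever $\lambda\|g\|_\Phi\le 1$ (by convexity and $\Phi(0)=0$). The price of your route is the diagonal step.

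Three points to tighten.

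First, the diagonalization must run along the given sequence, not through a subsequence, because the lemma requires $\lim_\varepsilon E_\varepsilon(w_\varepsilon;A)=J_{\{\varepsilon\}}(u;A)$ for the same $\{\varepsilon\}$. Take $(M_j,\delta_j)\to(\infty,0)$ and thresholds $\varepsilon_j\downarrow 0$ beyond which both $E_\varepsilon(w^{(j)}_\varepsilon;A)\le J_{\{\varepsilon\}}(u;A)+C/M_j+o_{\delta_j}(1)+1/j$ and $\|w^{(j)}_\varepsilon-u\|_\Phi\le 1/j$ hold. Then set $w_\varepsilon:=w^{(j)}_\varepsilon$ for $\varepsilon_{j+1}\le\varepsilon<\varepsilon_j$; each $K^\varepsilon$ remains a compact subset of $A$.

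Second, $\int\Phi(\lambda|v_\varepsilon-u|)\,dx\to 0$ needs no $\Delta_2$, since it follows from the inequality just quoted. If you iterate $\Delta_2$ at infinity instead, on $\{|v_\varepsilon-u|<t_0\}$ the integrand is only bounded by $\Phi(\lambda t_0)$. There you also need $v_\varepsilon-u\to 0$ in measure, not just boundedness of the domain.

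Third, the weak$^*$ limit $\mu$ plays no role. Note that $w^i_\varepsilon=u$ on all of $(A\setminus A_i)_{,1}$, not only on $(A\setminus A_M)_{,1}$. Its contribution $\beta\int_{A\setminus A_\delta}(1+\Phi(|\nabla_\alpha u|))\,dx_\alpha$ is small by absolute continuity of the integral, and that is what should fix the choice of $A_\delta$.
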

    	\begin{proof}
	
	First we get an estimate of the energy, in fact from \eqref{growthW} there exists $C>0$ such that
	\[
	\sup_{\varepsilon}
	\int_{A\times (-1/2,1/2)}
	\left(
	1
	+
	\Phi(|\nabla_\alpha v_{\varepsilon}|)
	+
	\Phi\!\left(\left|\frac{1}{\varepsilon} \nabla_3 v_{\varepsilon}\right|\right)
	\right)
	dx
	\le C.
	\]
	Then we can introduce the sets constants $K(\varepsilon)$ and the sets $A(\varepsilon)$ exactly as in \cite[equation (2.10)]{BFF}. The only difference is that now we get the Orlicz variant of \cite[eq. (2.11)]{BFF}, i.e.
\begin{equation*}
		K(\varepsilon)
		:=
		\left|\Bigl[
		\frac{1}{(\int_{A\times(-1/2,1/2)}\Phi(|v_{\varepsilon}-u|)dx)^{1/2}}
		\Bigr]\right|,
		\qquad
		M(\varepsilon):=[\sqrt{K(\varepsilon)}],
	\end{equation*}
	where $|[a]|$ is the integer part of $a$,
	
	
	
	
	\[
	A(\varepsilon)
	:=
	\left\{
	x_\alpha\in A:
	\mathrm{dist}(x_\alpha,\partial A)
	<
	\frac{M(\varepsilon)}{K(\varepsilon)}
	\right\}.
	\]
	
	

	
	\begin{equation}\label{decoupage_mince}
		\int_{(A_{i(\varepsilon)}^{\varepsilon})\times(-1/2,1/2)}
		\left(
		1+\Phi(|D_\alpha v_{\varepsilon}|)
		+\Phi\!\left(\left|\frac{1}{\varepsilon} D_3 v_{\varepsilon}\right|\right)
		\right)
		dx
		\le
		\frac{C}{M(\varepsilon)},
	\end{equation}
	for a suitable strip of the type 
    \[
	A_{i}^{\varepsilon}
	=
	\left\{
	x_\alpha\in A:
	\mathrm{dist}(x_\alpha,\partial A)
	\in
	\left[
	\frac{i}{K(\varepsilon)},
	\frac{i+1}{K(\varepsilon)}
	\right]
	\right\}, \text{ } i=0, \cdots, M(\varepsilon)-1.
	\]
    Then we consider planar cut-off functions as in \cite{BFF}
	
	$\varphi_{\overline{\varepsilon}}\in C_c^\infty(A)$

	\[
	0\le\varphi_{\varepsilon}\le1,
	\qquad
	\|D_\alpha\varphi_{\varepsilon}\|_{L^\infty}
	\le 2K(\varepsilon),
	\]
	
	\[
	\varphi_{\overline{\varepsilon}}=
	\begin{cases}
		1 & \text{if } \mathrm{dist}(x_\alpha,\partial A)
		> \frac{i(\varepsilon)+1}{K(\varepsilon)},\\[4pt]
		0 & \text{if } \mathrm{dist}(x_\alpha,\partial A)
		\le \frac{i(\varepsilon)}{K(\varepsilon)}.
	\end{cases}
	\]
	
and define
	
	\[
	w_{\varepsilon}
	:=
	\varphi_{\varepsilon} v_{\varepsilon}
	+
	(1-\varphi_{\varepsilon})u.
	\]
	
	Then
	$w_{\varepsilon}=u$ outside  $K^{\varepsilon}\times (-1/2,1/2)$, with 
    
    
    $K^{\varepsilon}:= \left\{x_{\alpha}\in A \:\ \mathrm{dist}(x_\alpha,\partial A)
	\geq \frac{i(\varepsilon)}{K(\varepsilon)} \right\}$. Moreover $w_{\varepsilon} \in W^{1}L^\Phi(A\times(-1/2,1/2); \mathbb{R}^{3})$. 
	
	\medskip

Since
	$0\le\varphi_{\varepsilon}\le1$
	and
	$v_{\varepsilon}\to u$ in $L^\Phi$ (see \eqref{subseq_v_eps}), then
	
	\begin{equation*}
		\|w_{\varepsilon}-u\|_{L^\Phi(A\times(-1/2,1/2))}
		\to0.
	\end{equation*}
	
	\medskip
	
	Since \[\nabla_\alpha w_{\varepsilon}
	=
	\varphi_{\varepsilon} \nabla_\alpha v_{\varepsilon}
	+
	(1-\varphi_{\varepsilon})\nabla_\alpha u
	+
	\nabla_\alpha\varphi_{\varepsilon}
	\otimes (v_{\varepsilon}-u),
	\]
	by $\Delta_2$ and convexity
	
	\[
	\Phi(|\nabla_\alpha w_{\varepsilon}|)
	\le
	C\big(
	\Phi(|\nabla_\alpha v_{\varepsilon}|)
	+
	\Phi(|\nabla_\alpha u|)
	+
	\Phi(|\nabla_\alpha\varphi_{\varepsilon}||v_{\varepsilon}-u|)
	\big).
	\]

	%
	%
	%
	%
	By the fact that $v_\varepsilon$ is a recovery sequence, i.e. it satisfies \eqref{subseq_v_eps}, we have 
    \[
    \begin{aligned}
			J_{\{\varepsilon\}}(u;A) \ge \\
			\limsup_{\varepsilon\to0} \int_{(A\times(-1/2,1/2))\cap [\{x_{\alpha}: \mathrm{dist}(x_\alpha,\partial A)
				> \frac{i(\varepsilon)+1}{K(\varepsilon)}\}\times (-1/2,1/2)]} 	W_{\varepsilon}\!\!\left(
			x_\alpha,x_3;
			\nabla_\alpha v_{\varepsilon} \,\middle|\, \frac{1}{\varepsilon}\nabla_3 v_{\varepsilon}
			\right)\,dx_{\alpha}dx_{3}  \\
			\geq \limsup_{\varepsilon\to0} \bigg\{ \int_{A\times(-1/2,1/2)} W_{\varepsilon}\!\left(
			x_\alpha,x_3;
			\nabla_\alpha w_{\varepsilon} \,\middle|\, \frac{1}{\varepsilon}\nabla_3 w_{\varepsilon}
			\right)\,dx_{\alpha}dx_{3}  \\
			- \beta\int_{A_{\varepsilon}\cap [\{x_{\alpha}: \mathrm{dist}(x_\alpha,\partial A)
				< \frac{i(\varepsilon)}{K(\varepsilon)}\}\times (-1/2,1/2)]} (1 + \Phi(|\nabla_{\alpha}u|)) \,dx_{\alpha}dx_{3} \\
			-\beta \int_{(A^{\varepsilon}_{i(\varepsilon)})\times (-1/2,1/2)} \left(1 + \Phi(|\nabla_\alpha v_{\varepsilon}|) + \Phi(|\frac{1}{\varepsilon}\nabla_{3}v_{\varepsilon}|)\right) \,dx_{\alpha}dx_{3} \\
			-C (K(\varepsilon)) \int_{(A^{\varepsilon}_{i(\varepsilon}))_\times(-1/2,1/2)} \Phi(|v_{\varepsilon}-u|) \,dx_{\alpha}dx_{3} \bigg\} \\
			\ge \limsup_{\varepsilon\to0}
			E_{\varepsilon}(w_{\varepsilon};A) - C\liminf_{\varepsilon\to0} \mathcal{L}^{2}(A(\varepsilon)) - C\beta\liminf_{\varepsilon\to0} \dfrac{1}{M(\varepsilon)}  \\
			-\beta\liminf_{\varepsilon\to0} (\int_{A\times(-1/2,1/2)}\Phi(|v_{\varepsilon}-u|)dx)^{1/2} \\
			= \limsup_{\varepsilon\to0}
			E_{\varepsilon}(w_{\varepsilon};A),
            \end{aligned}
            \]
	where in the fifth line we have used $\Delta_2$, exploiting the fact that the gradients of the cut-oof functions are bounded by $K(\varepsilon)$. In the sixth line we have used \eqref{decoupage_mince} to   get the last estimate.
	
	\medskip
	
	The definition of $J_{\{\varepsilon\}}$ in \eqref{freeGammaliminf}, together with the above inequality, concludes the proof.
    \end{proof}
\begin{proof}[Proof of Theorem \ref{thm2.5BFF}]
We start by observing that in the definition of $J_{\{\varepsilon\}}(u;A)$ the convergence is a metric one, since we are on bounded sets. Thus the existence of a not relabeled subsequence for which this $\Gamma$-.liminf is a $\Gamma$-limit is well known (see \cite[Theorem 8.5]{DMbook}).
 In order to prove the existence of a density $W_{\{\varepsilon\}}$ it suffices to prove that $J_{\{\varepsilon\}}$ satisfies the assumptions of \cite[Theorem 4.9]{MM}.

 In fact the assumptions on the Orlicz function $\Phi$ guarantee that $W^1L^\Phi(\Omega;\mathbb R^3)$ falls into the setting of applications of \cite[Theorem 3.3]{MM}, which constitutes a crucial step to apply also \cite[Theorem 4.9]{MM}.

 On the other hand, standard arguments (we refer to \cite[Proof of Theorem 2.5]{BFF}) guarantee that
 \begin{itemize}
 \item $J_{\{\varepsilon\}}$ is local, i.e. $J_{\{\varepsilon\}}(u;A)= J_{\{\varepsilon\}}(v;A)$ whenever $W^1L^\Phi(\omega;\mathbb R^3) \ni u=v$ $\mathcal L^N$ a.e. on $A$
 \item $J_{\{\varepsilon\}} (u;A)\leq \beta \int_A (1+ \Phi(\nabla_\alpha u))dx_\alpha$;
 \item $J_{\{\varepsilon\}}(u+c;A)= J_{\{\varepsilon\}}(u;A)$ for every $c \in \mathbb R$ and $u \in W^1L^\Phi(A;\mathbb R^3)$.
 \item $J_{\{\varepsilon\}}(\cdot,A)$ is $L^\Phi(\Omega)$-strongly lower semicontinuous. 
 \end{itemize}

 From the coercivity of $W_\varepsilon$, the translation invariance of $J_{\{\varepsilon\}}$ (stated in the third bullet above), the Poincar\'e-Wirtinger 's inequality (cf. \cite[Theorem 4.4]{CM}), the compact embedding of $W^1L^\Phi$ in $L^\Phi$ (see Theorem \ref{thmcompact}), guarantee that  $J_{\{\varepsilon\}}$ is $L^1$ lower semicontinuous.
 
 Thus it remains to prove that $J_{\{\varepsilon\}}$ is a finite nonnegative Radon measure.

 In turn, this latter property rests on proving the so-called nested subadditivity, all the other properties can be deduced exactly as in the proof of \cite[Theorem 2.5]{BFF}, hence we omit them. 

We now show that $J_{\{\varepsilon\}}$ is subadditive. For open sets $C\Subset B\subset A \subset \omega$,
\begin{equation}\label{2.18BFF}
J_{\{\varepsilon\}}(u;A) \le J_{\{\varepsilon\}}(u;B) + J_{\{\varepsilon\}}(u;A\setminus \bar C).
\end{equation}
For small $\delta>0$ we can find two open subsets $B_\delta\subset B$, $D_\delta\subset A\setminus \bar C$ with
\begin{equation}\label{2.19BFF}
\int_{A\setminus(B_\delta\cup D_\delta)} (1+\Phi(|\nabla_\alpha u|) dx_\alpha < \delta,
\end{equation}
and such that $B_\delta \cap D_\delta \not = \emptyset$. Standard arguments relying on the definition of $\Gamma$-limit and on De Giorgi's slicing techniques ensure the existence of sequences $v^{B_\delta}_{\varepsilon}$, $v^{D_\delta}_{\varepsilon}$ agreeing with $u$ on the respective lateral boundaries (see Lemma \ref{prelim_lemma}) 
which are recovery sequences for $J_{\{\varepsilon\}}$ in $B^\delta$ and $D^\delta$, respectively. Next we can define Radon measures
\begin{align*}
\lambda_{\varepsilon} := &(1+ \Phi\left(|\nabla_\alpha v^{B_\delta}_{\varepsilon}, 1/\varepsilon \nabla_3 v^{B_\delta}_{\varepsilon}|\right) + \Phi\left(|\nabla_\alpha v^{D_\delta}_{\varepsilon}, 1/\varepsilon \nabla_3 v^{D_\delta}_{\varepsilon}\right)\, \chi_{(B_\delta\cap D_\delta)_{\varepsilon}} \mathcal{L}^3.
\end{align*}
By coercivity, $\lambda_{\varepsilon}$ is  a sequence of bounded Radon measures, hence converging $ \rightharpoonup^*$, up to a not relabelled subsequence, to $\lambda$. 
Set $\widehat{\lambda}(X) := \lambda(X \times [-1/2,1/2])$ for any Borel subset 
$X \subset \omega$. 

Define, for $0 < \eta < 1$,
\[
S^{\delta}_{\eta} := 
\left\{ x \in B_{\delta} \cap D_{\delta} \;:\; 
\operatorname{dist}(x_{\alpha}, \partial B_{\delta}) = \eta 
\right\}.
\]
The family is made by pairwise disjoint open sets so that there exists  $\eta_0$ such that
\begin{equation}
\widehat{\lambda}(S^{\delta}_{\eta_0}) = 0.
\label{2.21}
\end{equation}

For $L^{\delta}_{\zeta}$, a layer of thickness $\zeta$ around $S^{\delta}_{\eta_0}$, i.e.
\[
L^{\delta}_{\zeta}
:= \{ x_\alpha \in B_\delta \cap D_\delta : 
\operatorname{dist}(x_\alpha, S^{\delta}_{\eta_0}) \le \zeta \},
\]
consider a smooth cut-off function 
$\varphi^{\delta}_{\zeta} \in C^\infty_0(\mathbb{R}^2)$ such that
\begin{equation}
\label{eq:cutoff_properties}
\begin{cases}
\|\varphi^{\delta}_{\zeta}\|_{L^\infty} \le 1, \\
\|D_\alpha \varphi^{\delta}_{\zeta}\|_{L^\infty} \le C/\zeta, \\
\varphi^{\delta}_{\zeta} = 0 
\quad \text{if } x_\alpha \in B_\delta 
\text{ and } \operatorname{dist}(x_\alpha, \partial B_\delta) \ge \eta_0 + \zeta,\\[4pt]
\varphi^{\delta}_{\zeta} = 1
\quad \text{if } x_\alpha \notin B_\delta 
\text{ or } \operatorname{dist}(x_\alpha, \partial B_\delta) \le \eta_0 - \zeta.
\end{cases}
\end{equation}

Define
\[
v_{\zeta,\varepsilon}
:=
\varphi^{\delta}_{\zeta} \, v^{D_\delta}_{\varepsilon}
+ (1 - \varphi^{\delta}_{\zeta}) \, v^{B_\delta}_{\varepsilon}
+ \chi_{A\setminus (B_\delta \cup D_\delta)_{\varepsilon}}\, u .
\]

Then $v^{\delta}_{\zeta,\varepsilon} \in W^1L^\Phi(A\times(-1/2;1/2);\mathbb{R}^3)$, and
\[
(v_{\zeta,\varepsilon} - u) \to 0 
\quad \text{in } L^\Phi(A \times(-1/2,1/2);\mathbb{R}^3).
\]

Thus, by definition of the $\Gamma$–liminf,  we have
\begin{equation}
J_{\{\varepsilon_R\}}(u;A)
\le \liminf_{\varepsilon\to 0^+} 
E_\varepsilon(v_{\zeta,\varepsilon};A).
\label{2.23}
\end{equation}
Taking into account the growth condition, \eqref{eq:cutoff_properties}, the fact that $\Phi$ satisfies the $\Delta_2$ condition,
We estimate the right-hand side:
\begin{align*}
E_\varepsilon(v_{\zeta,\varepsilon};A)
&\le
J_{\{\varepsilon_R\}}(u;B_\delta)
+ J_{\{\varepsilon_R\}}(u;D_\delta)
+ \beta \int_{A\setminus(B_\delta \cup D_\delta)} (1+\Phi(|\nabla_\alpha u|))\,dx_\alpha
\\
&\qquad
+ \beta \int_{L^{\delta}_{\zeta} \times (-1/2,1/2)}\left(1+\Phi\left(\nabla \varphi^\delta_\zeta (v^{B^\delta}_\varepsilon- v^{D^\delta}_\varepsilon)+ \varphi^\delta_\zeta (\nabla_\alpha v^{D^\delta}_\varepsilon, \frac{1}{\varepsilon}\nabla_3 v^{D^\delta_\varepsilon}) +\right.\right.\\
&\left.\left.(1-\varphi^\delta_\zeta) (\nabla_\alpha v^{B^\delta}_\varepsilon, \frac{1}{\varepsilon}\nabla_3 v^{B^\delta_\varepsilon})\right)\right) dx\\
&\leq  \beta \int_{A\setminus(B_\delta \cup D_\delta)} (1+\Phi(|\nabla_\alpha u|))\,dx_\alpha+ \\
& + \int_{L^{\delta}_{\zeta} \times (-1/2,1/2)}C(c/\zeta) \Phi(|(v^{B^\delta}_\varepsilon- v^{D^\delta}_\varepsilon)|)d x+ C
\lambda_\varepsilon(L^{\delta}_{\zeta} \times (-1/2,1/2)).
\end{align*}
where the constant $C\left(\frac{c}{\lambda}\right)$ depends on the bound on the gradient of the cut-off function and in the last estimate we have exploited the convexity of $\Phi$ and the $\Delta_2$ condition.

Since
\[
|v^{B_\delta}_\varepsilon - v^{D_\delta}_\varepsilon|
\chi_{(L^{\delta}_{\zeta}\times (-1/2,1/2))}
\le C(
|v^{B_\delta}_\varepsilon - u|\chi_{B_{\delta,\varepsilon}}
+
|v^{D_\delta}_\varepsilon - u|\chi_{D_{\delta,\varepsilon}}),
\]
the last but one term goes to zero as $\varepsilon \to 0$, while, by the definition of $\lambda_\varepsilon$
\[
\limsup_{\varepsilon\to 0^+}
\lambda_\varepsilon(L^{\delta}_{\zeta} \times (-1/2,1/2))
\le
\widehat{\lambda}(L^{\delta}_{\zeta}).
\]

As $\zeta\to 0$, by \eqref{2.21}
\[
\widehat{\lambda}(L^{\delta}_{\zeta})
\to 
\widehat{\lambda}(S^{\delta}_{\eta_0})
= 0.
\]

Letting $\zeta\to 0$ in \eqref{2.23}, and using \eqref{2.19BFF}, we obtain
\[
J_{\{\varepsilon_R\}}(u;A)
\le
\liminf_{\delta\to 0^+}
\left(
J_{\{\varepsilon_R\}}(u;B_\delta)
+
J_{\{\varepsilon_R\}}(u;D_\delta)
+ \beta\delta
\right)
\le
J_{\{\varepsilon_R\}}(u;B)
+
J_{\{\varepsilon_R\}}(u;A\setminus \bar C),
\]
which proves \eqref{2.18BFF}.

The existence of $W_{\{\varepsilon\}}$ and its properties follow arguing exactly as in the end of Step 3 and Step 4 of \cite[Theorem 2.5]{BFF} and applying  \cite[Theorem 4.9]{MM}.
\end{proof}

    \begin{thm}
    \label{intrep}
    Let $E_\varepsilon$ be as in \eqref{functionalfree} with $W_\varepsilon(x_\alpha, x_3, \xi)= W\left(\frac{x_\alpha}{\varepsilon}, x_3, \xi\right)$ with $W:\mathbb R^2 \times (-1/2,1/2)\times \mathbb R^{3\times 3}\to \mathbb R$, a Carath\'eodory function, periodic in the first variable and satisfying \eqref{growthW}.
    
    If $u \in W^{1}L^{\Phi}(\omega;\mathbb R^3)$ and if $A$ is an open set of $\omega$, then
    \[\Gamma-\lim(L^\Phi)(E_\varepsilon(u,A))=\int_A W_{\rm hom}(\nabla_\alpha u) dx_\alpha,\]
    with 
    \begin{align}\label{Whom}
   & W_{\hom}(\xi_\alpha):=  \lim_{t \rightarrow + \infty} \inf_{\varphi} \Bigg \{\frac{1}{t^2} \int_{(tQ')_{,1}} W(x_{\alpha}, x_3, \xi_\alpha + \nabla_{\alpha} \varphi | \nabla_3 \varphi) \, d x_{\alpha} d x_3: \\
& \, \varphi \in W^{1,\infty}((tQ')_{,1}; \mathbb R^3), \,\, \varphi(x_{\alpha}, x_3) = 0\,\,\, \textnormal{for every $(x_{\alpha}, x_3) \in (\partial Q')\times \left(-\frac{1}{2},\frac{1}{2}\right)$} \Bigg \} \nonumber
    \end{align}
    \end{thm}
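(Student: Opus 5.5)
The plan is to rely on Theorem \ref{thm2.5BFF} and reduce everything to a pointwise identification of the density. Fix an arbitrary vanishing sequence $\{\varepsilon\}$. By Theorem \ref{thm2.5BFF} there is a subsequence along which the $\Gamma$-limit $J_{\{\varepsilon\}}(u;A)=\int_A W_{\{\varepsilon\}}(x_\alpha,\nabla_\alpha u)\,dx_\alpha$ exists, with $W_{\{\varepsilon\}}$ Carath\'eodory and quasiconvex in the second variable. The goal is to show $W_{\{\varepsilon\}}(x_\alpha,\xi_\alpha)=W_{\hom}(\xi_\alpha)$ for a.e. $x_\alpha$ and all $\xi_\alpha$. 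Since this limit does not depend on the subsequence, the Urysohn property of $\Gamma$-convergence (\cite{DMbook}) then gives convergence of the whole family as $\varepsilon\to0$.

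\textbf{Existence of the limit in \eqref{Whom}.} Set $m(t):=\inf\{\int_{(tQ')_{,1}}W(\cdot,\xi_\alpha+\nabla_\alpha\varphi|\nabla_3\varphi)\}$ over the admissible $\varphi$. I will show that $t\mapsto m(t)$ is (almost) subadditive, using the classical Braides/M\"uller argument. For $s<t$, tile $tQ'$ with $[t/s]^2$ integer translates of $sQ'$. Place a near optimal $\varphi_s$, extended by zero, on each translate; $1$-periodicity in $x_\alpha$ of $W$ makes these copies admissible with the same energy. On the leftover frame use $\varphi=0$, which by \eqref{growthW} costs at most $\beta(1+\Phi(|\xi_\alpha|))\,\mathcal L^2(\text{frame})=O(ts)$. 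Dividing by $t^2$ and letting $t\to\infty$ and then taking $\liminf_s$ gives $\limsup_t m(t)/t^2\le\liminf_s m(s)/s^2$. Non-integer $t$ is handled by monotonicity up to an $O(t)$ error.

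\textbf{Identification of the density.} By the Lebesgue differentiation/blow-up argument, it suffices to check on affine maps $u_{\xi}(x_\alpha)=\xi_\alpha x_\alpha$ that
\[
J_{\{\varepsilon\}}(u_\xi;Q'(x_0,\rho))=\rho^2 W_{\hom}(\xi_\alpha)\quad\text{for every square }Q'(x_0,\rho)\subset\omega.
\]

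For the upper bound, take a near optimal $\varphi_t$ in \eqref{Whom} and extend it $tQ'$-periodically in $x_\alpha$. Define
\[
u_\varepsilon(x):=\xi_\alpha x_\alpha+\varepsilon\varphi_t\left(\frac{x_\alpha}{\varepsilon},x_3\right).
\]
Then $\nabla_\varepsilon u_\varepsilon=(\xi_\alpha+\nabla_\alpha\varphi_t\,|\,\nabla_3\varphi_t)(x_\alpha/\varepsilon,x_3)$. Also $u_\varepsilon\to u_\xi$ in $L^\Phi$, since $\varphi_t\in W^{1,\infty}$. The Riemann--Lebesgue lemma for periodic integrands then yields $\limsup E_\varepsilon(u_\varepsilon)\le \rho^2 m(t)/t^2$; letting $t\to\infty$ concludes.

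For the lower bound, take a recovery sequence and, by Lemma \ref{prelim_lemma}, modify it so that $v_\varepsilon=u_\xi$ near the lateral boundary of $Q'(x_0,\rho)_{,1}$. Rescale by $y_\alpha=(x_\alpha-x_0)/\varepsilon$. The function $\psi_\varepsilon(y_\alpha,x_3):=\varepsilon^{-1}(v_\varepsilon-u_\xi)(x_0+\varepsilon y_\alpha,x_3)$ is then an admissible competitor on $(\rho/\varepsilon)Q'_{,1}$, up to a translation by $x_0/\varepsilon$ handled via periodicity with an $O(t)$ boundary correction. Its energy satisfies $E_\varepsilon(v_\varepsilon)=\varepsilon^2\int W(\ldots)\ge \rho^2 m(\rho/\varepsilon)/(\rho/\varepsilon)^2$. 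One technical point: competitors are $W^1L^\Phi$ rather than $W^{1,\infty}$. To close this, I will show the infimum in \eqref{Whom} is unchanged over $W^{1}_0L^\Phi$-type lateral data, by density of smooth functions in $W^1L^\Phi$ (valid under $\Delta_2$) together with the continuity of $\xi\mapsto\int W(\cdot,\xi)$ along strongly converging gradients, which follows from \eqref{growthW}, $\Delta_2$ and Vitali.

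\textbf{Main obstacle.} I expect the main obstacle to be the lower bound, specifically two points. The first is the boundary-matching step in the Orlicz setting: there the $\Delta_2$ estimates of Lemma \ref{prelim_lemma} replace H\"older-type bounds. The second is the density argument needed to pass from $W^1L^\Phi$ competitors to $W^{1,\infty}$ ones while the $x_3$-direction is left free. I would first try mollifying only in $x_\alpha$ after extension by zero, combined with a Lipschitz truncation, and fall back on the $\Delta_2$ modular continuity if that fails.
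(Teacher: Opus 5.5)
Your proposal is correct and follows essentially the same route as the paper, which defers to the Braides--Fonseca--Francfort argument. That route has three steps: subsequential integral representation from Theorem \ref{thm2.5BFF}; identification of $W_{\{\varepsilon\}}$ with $W_{\hom}$ by comparison on affine maps over small squares (periodic correctors for the upper bound, Lemma \ref{prelim_lemma} plus rescaling for the lower bound); and independence of the subsequence. The paper leaves implicit the details you supply: existence of the limit by subadditivity, the $O(t)$ translation correction, and the density step. That density step is most easily done by reflecting across $x_3=\pm\frac12$ and mollifying in all variables, since mollifying only in $x_\alpha$ would not give Lipschitz regularity in $x_3$.
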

    \begin{proof}[Proof]
    The proof of this result is entirely analogous the one of \cite[Theorem 4.2]{BFF}. In fact, it relies on Theorem \ref{thm2.5BFF} and on a comparison argument that allows to give an explicit form to $W_{\{\varepsilon\}}$.
    This result, in turn, can be proven exactly as in \cite{BFF}, just replacing the $p$-growth condition therein by the $\Phi$ one. 
    \end{proof}

    \begin{rem}
    We stress, as already emphasized in the Introduction that Theorem \ref{thm2.5BFF} can be applied to more general contexts, i.e. the same arguments apply to functionals arising in the multiscale homogenization contexts and in dimensional reduction frameworks with original and limiting dimensions
 different from $3$ and $2$, respectively. In particular Theorem \ref{intrep}, when no dimension reduction is taken into account, i.e. $W$ not depending on $x_3$ constitutes an alternative proof of the results contained in \cite{tchin4, tacha7} and in the bibliography quoted therein.  
 \end{rem}
 
	\section{Energy densities in the manifold constrained setting} 
    \label{subsect2M} 

 Given $s\in\mathcal{M}$, we consider the orthogonal projection 
\[
P_s: \mathbb R^3\to T_s(\mathcal{M}),
\]
and we define the function 
$\mathbf{P}_s: \mathbb R^{3\times 3}\to [T_s(\mathcal{M})]^3$ as
\[
\mathbf P_s(\xi):=\left[P_s(\xi_1)| P_s(\xi_2)| P_s(\xi_3)\right],
\]
for every $\xi=[\xi_1|\xi_2|\xi_3]\in\mathbb R^{3\times 3}.$ 

For a Carathéodory function $f:\mathbb R^3 \times \mathbb R^{3 \times 3}\to \mathbb R$, we set
\begin{equation}
\label{perturbedf}
    \Bar{f}(x,s,\xi):=f(x,\mathbf{P}_s(\xi))+\Phi(|\xi-\mathbf{P}_s(\xi)|).
\end{equation}
This function will play a crucial role in our subsequent analysis, since it will appear in the formulas to detect our limiting energy densities.
 By construction the function $\Bar{f}:\mathbb R^3\times\mathcal M\times\mathbb R^{3\times 3}\to\mathbb R$ is Carathéodory, i.e. it is measurable with respect to the first variable and continuous with respect to the last two variables. Moreover, if conditions (H1) and (H2) are satisfied then $\Bar{f}$ is $1$-periodic in the first variable and satisfies an analogous growth condition, i.e. there exists $\beta'>\alpha' >0$ such that
 \begin{equation}\label{pgrowth}
 \alpha'\Phi(|\xi|)\le\Bar{f}(x,s,\xi)\le \beta'(1+\Phi(|\xi|)),
 \end{equation}
for every $(s,\xi)\in \mathcal M\times\mathbb R^{3\times 3}$ and for a.e. $x\in\mathbb R^3$.

Following \cite[Proposition 2.1]{ELTZ}, we characterize the density $Tf_{\rm hom}^0$ of the $\Gamma$-limit and we will prove some of its properties.

\begin{prop}
    \label{characterization}
Let $\Phi$ be a Young function of class $\Delta_{2}\cap\nabla_{2}$.  For every  Carathéodory function $f:\mathbb R^3\times\mathbb R^{3\times 3}\to\mathbb R$ satisfying conditions {\rm (H1)} and {\rm (H2)} the following properties hold.
    \begin{itemize}
        \item[{\rm (i)}] For every $s\in\mathcal{M}$ and $\xi_\alpha\in[T_s(\mathcal{M})]^2$
        \begin{equation}
        \label{(2.3)}
        Tf_{\rm hom}^0(s, \xi_\alpha)=\Bar{f}_{\rm hom}^0(s,\xi_\alpha),
        \end{equation}
        where $Tf^0_{\rm hom}$ is defined as in \eqref{Tfhom} and
        \begin{align}
            &\Bar{f}^0_{\rm hom}(s,\xi_\alpha) =  \lim_{t \rightarrow + \infty} \inf_{\varphi} \Bigg \{\frac{1}{t^2} \int_{(tQ')_{,1}} \Bar f(x_{\alpha}, x_3,s, \xi_\alpha + \nabla_{\alpha} \varphi | \nabla_3 \varphi) \, d x_{\alpha} d x_3: \label{f0hom}\\
& \, \varphi \in W^{1,\infty}((tQ')_{,1}; \mathbb R^3), \,\, \varphi(x_{\alpha}, x_3) = 0\,\,\, \textnormal{for every $(x_{\alpha}, x_3) \in (\partial Q')\times \left(-\frac{1}{2},\frac{1}{2}\right)$} \Bigg \} \nonumber
        \end{align}
        
        \item[{\rm (ii)}] The function $Tf_{\rm hom}^0$ is tangentially quasiconvex in the second variable, i.e.
        \[
        Tf_{\rm hom}^0(s,\xi_\alpha)\le\int_{Q'} Tf_{\rm hom}^0(s, \xi_\alpha+ \nabla_\alpha \psi)dx_\alpha,
        \]
        for every $s\in\mathcal{M}, \xi_\alpha\in [T_s(\mathcal{M})]^2$, and $\psi\in W^{1,\infty}_0(Q'; T_s (\mathcal{M}))$. In particular, $Tf_{\rm hom}^0(s, \cdot)$ is rank one convex.
        \item[{\rm (iii)}] $Tf_{\rm hom}^0$  satisfies inequalities as in \eqref{pgrowth}, uniformly with respect to $s$. Moreover, there exists $C>0$ such that for every $s\in\mathcal{M}$ and $\xi_\alpha, \xi_\alpha'\in [T_s(\mathcal{M})]^2$  
        \begin{equation*}
        |Tf_{\rm hom}^0(s, \xi_\alpha) - Tf_{\rm hom}^0(s, \xi_\alpha')|\le C|\xi_\alpha-\xi_\alpha'|(1+\phi(|\xi_\alpha|+|\xi_\alpha'|)),
        \end{equation*}
       where $ \phi : [0, +\infty) \to [0, +\infty)$ is nondecreasing,
right continuous and such that
$\Phi(t) = \int_0^t \phi(s)ds$,
$\phi(0) = 0, \phi(s) > 0, s > 0, \lim_{s\to +\infty}
\phi(s) = +\infty$.

    \end{itemize}
\end{prop}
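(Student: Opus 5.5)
Following the scheme of \cite[Proposition 2.1]{ELTZ} and \cite{baba2}, I would prove (i) first and obtain (ii) and (iii) from it. The limit in \eqref{f0hom} exists and is finite. Indeed, $\Bar f(\cdot,\cdot,s,\cdot)$ is a Carath\'eodory integrand, periodic in $x_\alpha$, and satisfies \eqref{pgrowth}. So Theorem \ref{intrep} applies to $W(y,\xi):=\Bar f(y,s,\xi)$ for fixed $s$, and $\Bar f^0_{\rm hom}(s,\cdot)=W_{\hom}$.

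\textbf{Inequality $\Bar f^0_{\rm hom}\le Tf^0_{\rm hom}$.} For $\xi_\alpha\in[T_s\mathcal M]^2$ and a tangent-valued test field $\varphi$, the matrix $\xi_\alpha+\nabla_\alpha\varphi|\nabla_3\varphi$ takes values in $[T_s\mathcal M]^3$. Hence $\mathbf P_s$ acts as the identity on it, and $\Bar f=f$ there. Since tangent-valued competitors are admissible in \eqref{f0hom}, the inequality follows for each $t$, and then for the liminf in $t$.

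\textbf{Inequality $Tf^0_{\rm hom}\le\Bar f^0_{\rm hom}$.} Take an arbitrary $\varphi\in W^{1,\infty}((tQ')_{,1};\mathbb R^3)$ vanishing on the lateral boundary. The projected field $P_s\varphi$ is tangent-valued, still vanishes on the lateral boundary, and satisfies $\nabla(P_s\varphi)=\mathbf P_s(\nabla\varphi)$. Because $\xi_\alpha$ is already tangent, this gives
\[
f(x,\xi_\alpha+\nabla_\alpha P_s\varphi|\nabla_3P_s\varphi)=f(x,\mathbf P_s(\xi_\alpha+\nabla_\alpha\varphi|\nabla_3\varphi))\le\Bar f(x,s,\xi_\alpha+\nabla_\alpha\varphi|\nabla_3\varphi),
\]
using $\Phi\ge0$. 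Taking infima and then $t\to\infty$ gives the bound. As a byproduct, the liminf in \eqref{Tfhom} is a genuine limit.

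\textbf{Property (ii).} By (i), $Tf^0_{\rm hom}(s,\cdot)$ coincides on $[T_s\mathcal M]^2$ with $\Bar f^0_{\rm hom}(s,\cdot)=W_{\hom}$. By Theorem \ref{intrep}, $W_{\hom}$ is the density of an $L^\Phi$-lower semicontinuous $\Gamma$-limit, hence quasiconvex on $\mathbb R^{3\times2}$ (cf.\ Theorem \ref{thm2.5BFF}). Test its quasiconvexity with $\psi\in W^{1,\infty}_0(Q';T_s\mathcal M)$: the matrices $\xi_\alpha+\nabla_\alpha\psi$ remain tangent, so one may replace $W_{\hom}$ by $Tf^0_{\rm hom}$ on both sides. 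Rank-one convexity along tangent rank-one directions $a\otimes b$, with $a\in T_s\mathcal M$, then follows in the standard way from quasiconvexity, using laminate test functions.

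\textbf{Property (iii).} The growth bounds follow from the two comparisons. For the upper bound, use $\varphi=0$ in \eqref{Tfhom} together with (H2). For the lower bound, use (H2), Jensen's inequality (convexity of $\Phi(|\cdot|)$) and the zero lateral average of $\nabla_\alpha\varphi$. Both constants are independent of $s$. The local Lipschitz estimate is the standard fact that a rank-one convex function with $\Phi$-growth, $\Phi\in\Delta_2$, is locally Lipschitz with the stated modulus, as in \cite{tacha7}. The argument runs on each line $\xi_\alpha+\tau(\xi'_\alpha-\xi_\alpha)$ through rank-one directions, combined with $\Phi(t)\le t\phi(t)$ and $t\phi(t)\le\Phi(2t)\le C\Phi(t)$. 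Tangential rank-one directions span $[T_s\mathcal M]^2$, so the estimate extends to general pairs.

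I expect this last step to be the main obstacle. One must check that the convexity argument stays inside $[T_s\mathcal M]^2$, and that the Orlicz constants, coming from $\Delta_2$ and the relation between $\phi$ and $\Phi$, are uniform in $s$. By comparison, steps (i) and (ii) are nearly formal.
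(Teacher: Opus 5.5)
Your proposal is correct and takes essentially the same route as the paper. For (i) you use the projection argument of \cite{ELTZ}, which shows the two infima coincide for every $t$; for (ii) you use the quasiconvexity of $W_{\hom}=\Bar f^0_{\rm hom}(s,\cdot)$ from Theorem \ref{intrep}; and for (iii) you combine the growth bounds with the standard rank-one-convexity Lipschitz argument. Two small points: the paper runs the Lipschitz estimate on $\Bar f^0_{\rm hom}(s,\cdot)$, which is rank-one convex on all of $\mathbb R^{3\times 2}$ with $s$-uniform $\Phi$-growth, so your concern about staying inside $[T_s(\mathcal M)]^2$ does not arise; and in your lower growth bound the average $c$ of $\nabla_3\varphi$ need not vanish, but $|(\xi_\alpha|c)|\ge|\xi_\alpha|$ together with the monotonicity of $\Phi$ still gives the bound.
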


Before proving our statement, it is worth observing that for every $s \in \mathcal M$, $\Bar{f}^0_{\rm hom}(s,\cdot)$ in \eqref{f0hom}, is the $3D$-$2D$ homogenized energy density appearing in the dimension reduction problems in the unconstrained setting and it has been introduced in \cite{BFF} in the standard Sobolev setting, and recovered here in \eqref{Whom} for the case of Orlicz type growth. 
\color{black}
\begin{proof}
We start from {\rm (i)}, observing that the proof is identical to the one of \cite[(i) in Proposition 2.1]{ELTZ}.

In order to prove {\rm (ii)}, we observe that it is a consequence of Theorem \ref{intrep} applied to $W(x_\alpha, x_3, \xi):= \bar{f}(x_\alpha, x_3, s, \xi)$ in \eqref{perturbedf} with $f$  as in Theorem \ref{casopm1}.
This also yields that $\bar{f}^0_{\rm hom}(s, \cdot)$ is a quasiconvex function for every $s \in \mathcal{M}.$ 

Consequently, for any $s \in \mathcal{M}, \xi_\alpha \in [T_s(\mathcal{M})]^2$ and $\varphi \in W^{1, \infty}_0(Q', T_s(\mathcal{M})),$ it holds
\[
T f^0_{\rm hom} (s, \xi_\alpha) = \bar{f}^0_{\rm hom}(s, \xi_\alpha) \le \, \int_{Q'} \bar{f}^0_{\rm hom}(s, \xi_\alpha + \nabla_\alpha \varphi) \, dy_\alpha = \int_{Q'} T f^0_{\rm hom} (s, \xi_\alpha + \nabla_\alpha \varphi) dy_\alpha.
\]
\color{black}
This allows us to conclude that for every $s \in \mathbb R^3$,  $T f^0_{\rm hom}(s,\cdot)$ is tangentially quasiconvex. 
For the attainment of {\rm (iii)}, we observe that $T f^0_{\textnormal{hom}}(s,\cdot)$ is also rank one convex as long as \eqref{(2.3)} holds since  by {\rm (ii)} $\bar{f}^0_{\rm hom}(s, \cdot)$ is rank one convex.
\color{black}
\\
In view of assumption (H2) and the definition of $T f^0_{\textnormal{hom}}$ it is possible to show that $T f^0_{\rm hom}$  satisfies the same growth condition from above and below in the second variable uniformly with respect to the first. In fact it suffices to argue as in \cite[Section 2.4]{tacha5}. Finally, for what concerns the Lipschitz type property an argument entirely analogous to \cite[Proposition 3.2]{focar1} can be adopted taking into account the rank-one convexity of $\overline{ f}^0_{hom}$. This concludes the proof of (iii).
\end{proof}

    \color{black}

	In order to prove the upper bound in Theorem \ref{c1eq3Ma}, we need to localize the relaxed limiting energy defining
\begin{equation}
\label{Io}
    {\mathcal I}(u,A):=\inf\left\{\liminf_{\varepsilon\to 0 }I_{\varepsilon}(u_\varepsilon,A) : u_\varepsilon \to u\,\text{ in $L^\Phi(A_{,1};\mathcal M)$}\right\},
\end{equation}
for any  $u\in W^1L^\Phi(\omega; \mathcal{M})$ and $A\in\mathcal{A}(\omega)$. Moreover, we denote $I(u, \omega)$ simply by $I(u)$ for every $u\in W^1L^\Phi(\omega; \mathcal{M})$.\\
This is one of the goals of the next section in order to prove our main result.

	
	\section{Proof of Theorem \ref{casopm1}} \label{sect3M} 

    We recall that $f$ satisfies (H1) and (H2). We also recall that the candidate for the $\Gamma$-limit with respect to the strong $L^\Phi$-topology of the family of functionals $I_\varepsilon$ in \eqref{functional}, whose localization is defined in $\mathcal A(\omega)$  as  
\[
I_\varepsilon(u, A) := \left \{
\begin{array}{lll}
\!\!\!\!\!\! & \displaystyle \int_{A_{,1}} f \left (\frac{x_{\alpha}}{\varepsilon}, x_3, \nabla_\varepsilon u\right ) \, dx \qquad & \textnormal{if $u \in W^{1}L^\Phi(\Omega, \mathcal{M})$}\\[4mm]
\!\!\!\!\!\! & + \infty &\textnormal{elsewhere}
\end{array}
\right. 
\] is the functional $I$ given by \eqref{candidatepm1}.\\

\noindent
For any $A\in\mathcal{A}(\omega)$ and $u\in L^\Phi(\Omega; \mathcal{M})$, consider the functional $\mathcal I$ in \eqref{Io}.

\noindent
In order to prove the $\Gamma$- limsup inequality we introduce a suitable functional that is larger than $I$ and we prove that it is the restriction to $\mathcal{A}(\omega)$ of a Radon measure absolutely continuous with respect to $\mathcal{L}^{2}$.
Given a compact set $\mathcal{K}\subset\mathcal{M}$, and a not relabeled $\{{\varepsilon_k}\}_k$ we define for $u\in W^1L^\Phi(\omega; \mathcal{M})$ and $A \in \mathcal{A}(\omega)$

 \begin{align}\label{IKunif}
     I^{\{\varepsilon_k\}}_\mathcal{K}(u, A)&:=\inf_{\{u_k\}_k}\Bigg\{ \limsup_{k\to \infty} I_{\varepsilon_k}(u_k, A)\,:u_k \to u \textnormal{ uniformly, }  \\
     & \{\nabla_{\varepsilon_k}u_k\}_k \text{ is bounded in } L^\Phi(\Omega,\mathbb R^3),\nonumber\\
     &u_k(x)=u(x_\alpha) \text{ if } {\rm dist}(u(x_\alpha), \mathcal{K})>1 \text{ for a.e. }x\in\Omega
     \Bigg\}. \nonumber
 \end{align}

\begin{rem}
\label{abuso}
As in the unconstrained case (see Section \ref{mainsect}) we observe that, given $A\subset\mathcal{A}(\omega)$, the set
\[
V(A) := \left \{v \in W^1L^\Phi(A_{,1}; \mathcal{M}): \frac{\partial v}{\partial x_3} = 0 \,\,\, \text{a.e. on } A_{,1} \right \},
\]
is isomorphic to the Sobolev space $W^1L^\Phi(A; \mathcal{M}).$ 

Moreover, the bound on the scaled gradients and the uniform convergence in \eqref{IKunif}, guarantees that $\nabla u_k \rightharpoonup \nabla_\alpha u$.
\end{rem}


\begin{lem}[Localization]
\label{preupperbound}
    For every $u\in V(\omega),$ there exists a non-relabeled subsequence $(\varepsilon_k)_k$ such that the set function $ I^{\{\varepsilon_k\}}_\mathcal{K}(u, \cdot)$ is the restriction to $\mathcal{A}(\omega)$ of a Radon measure absolutely continuous with respect to $\mathcal{L}^{2}$.
\end{lem}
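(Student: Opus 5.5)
The plan is to follow the strategy of \cite{ELTZ} (itself modelled on \cite{baba2}), replacing $p$-growth estimates with $\Phi$-growth ones via the $\Delta_2$ condition, and then apply the De Giorgi--Letta criterion.

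\textbf{Step 1: choice of the subsequence.} Since $\mathcal A(\omega)$ contains a countable dense family $\mathcal R$ (finite unions of open rectangles with rational vertices), a diagonal argument yields a subsequence $\{\varepsilon_k\}$ along which the $\limsup$ in \eqref{IKunif} behaves like a $\Gamma$-limit on every $A\in\mathcal R$. The definition is then extended to general $A$ by inner regularity.

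\textbf{Step 2: the upper bound.} Test with the constant-in-$k$ sequence $u_k:=u$, which is admissible because $u\in V(\omega)$ is independent of $x_3$, so $\nabla_{\varepsilon_k}u=(\nabla_\alpha u|0)$. By (H2) this gives
\[
I^{\{\varepsilon_k\}}_{\mathcal K}(u,A)\le \beta\int_A\bigl(1+\Phi(|\nabla_\alpha u|)\bigr)\,dx_\alpha .
\]
Once the measure property is established, this bound will give absolute continuity with respect to $\mathcal L^2$.

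\textbf{Step 3: the De Giorgi--Letta properties.} I must verify that $I^{\{\varepsilon_k\}}_{\mathcal K}(u,\cdot)$ is increasing, superadditive on disjoint sets, inner regular, and nested subadditive. Monotonicity and superadditivity are immediate from the definition, since the admissible sequences restrict to subsets. Inner regularity follows from nested subadditivity together with the bound of Step 2.

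\textbf{Step 4: nested subadditivity, the main obstacle.} The inequality to prove is
\[
I_{\mathcal K}(u,A)\le I_{\mathcal K}(u,B)+I_{\mathcal K}(u,A\setminus\bar C), \qquad C\Subset B\subset A .
\]
I would repeat the slicing argument from the proof of Theorem \ref{thm2.5BFF}. Take near-optimal sequences $u_k$ on $B$ and $v_k$ on $A\setminus\bar C$, and glue them with a planar cut-off $\varphi$ across a layer in $B\setminus\bar C$ where the limiting measure of the energy densities has no mass.

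The difficulty is the constraint $u_k\in\mathcal M$, since the convex combination $\varphi u_k+(1-\varphi)v_k$ need not lie on $\mathcal M$. This is exactly where uniform convergence is used. Both sequences converge uniformly to $u$, so $\|u_k-v_k\|_\infty\to0$, and on the set where $\mathrm{dist}(u,\mathcal K)\le 1$ the combination lies in a tubular neighbourhood of the compact set $\mathcal K\cap$ (a 2-neighbourhood), where the nearest point projection $\pi$ onto $\mathcal M$ is smooth with bounded derivative. I therefore set $w_k:=\pi(\varphi u_k+(1-\varphi)v_k)$. Where $\mathrm{dist}(u,\mathcal K)>1$ both sequences equal $u$, so $w_k=u$ there; this preserves the boundary-type condition in \eqref{IKunif}, and it is precisely why that condition, together with the compactness of $\mathcal K$, appears in the definition.

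The chain rule gives $|\nabla_{\varepsilon_k}w_k|\le C\bigl(|\nabla_{\varepsilon_k}u_k|+|\nabla_{\varepsilon_k}v_k|+|\nabla\varphi|\,|u_k-v_k|\bigr)$. By $\Delta_2$ and convexity of $\Phi$, the energy in the layer is controlled by the measures $\lambda_k$ plus $C_\zeta\int\Phi(|u_k-v_k|)$, and the latter tends to $0$ by uniform convergence. The bound on $\nabla_{\varepsilon_k}w_k$ in $L^\Phi$ is preserved, and $w_k\to u$ uniformly.

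Letting $k\to\infty$, then the layer width $\zeta\to0$, then $\delta\to0$ exactly as in the proof of Theorem \ref{thm2.5BFF} yields the inequality. The De Giorgi--Letta criterion then concludes the proof.
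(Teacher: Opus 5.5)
\textbf{There is a genuine gap in Step 4, and it comes from the $\limsup$ in \eqref{IKunif}.}

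Gluing across a single layer $S$ with $\widehat\lambda(S)=0$ presupposes that the energy measures $\lambda_k$ of $u_k,v_k$ converge weakly$^*$. So you must pass to a further subsequence $\{\varepsilon_{k_j}\}$. In the proof of Theorem~\ref{thm2.5BFF} this is harmless: $J_{\{\varepsilon\}}$ is a $\Gamma$-liminf, and a bound along a subsequence bounds the $\liminf$ along the whole sequence. Here the monotonicity goes the other way, since $I^{\{\varepsilon_{k_j}\}}_{\mathcal K}\le I^{\{\varepsilon_k\}}_{\mathcal K}$. Your construction therefore only gives $I^{\{\varepsilon_{k_j}\}}_{\mathcal K}(u,A)\le I^{\{\varepsilon_k\}}_{\mathcal K}(u,B)+I^{\{\varepsilon_k\}}_{\mathcal K}(u,A\setminus\bar C)+o(1)$, which says nothing about $I^{\{\varepsilon_k\}}_{\mathcal K}(u,A)$.

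Without extraction, a layer fixed once and for all does not work either, because the energy may concentrate near different layers for different $k$. Any stability on the countable family $\mathcal R$ that Step 1 might provide does not help for a general open $A$: inner regularity of a general $A$ is exactly what you intend to deduce from this inequality.

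The missing idea is the one the paper uses. Fix $M$ and split $B\setminus\bar C$ into $M$ strips $S_i$ with cut-offs $\zeta_i$ satisfying $|\nabla_\alpha\zeta_i|\le 2M/L$. Average the glued energies over $i$ and choose the index $i_k$ \emph{depending on $k$}. The transition cost is then at most
$\frac{C_0}{M}\int_{B_{,1}\setminus\overline{C}_{,1}}\bigl(1+\Phi(|\nabla_{\varepsilon_k}u_k|)+\Phi(|\nabla_{\varepsilon_k}v_k|)+\Phi(M|u_k-v_k|)\bigr)\,dx$.
Its $\limsup_k$ is $O(1/M)$, by the $L^\Phi$ bound on the scaled gradients together with $\|u_k-v_k\|_\infty\to0$. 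This works along every sequence, with no extraction. Inside this scheme, your gluing $\pi(\varphi u_k+(1-\varphi)v_k)$ via the nearest-point projection is a perfectly good substitute for the paper's map $\Pi$.

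\textbf{Superadditivity is also not ``immediate from the definition''.}

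With a $\limsup$ you only have $\limsup_k(a_k+b_k)\le\limsup_k a_k+\limsup_k b_k$, which is the wrong direction. For a general sequence of functionals superadditivity can fail: think of energies that are cheap on $A_1$ and expensive on $A_2$ for even $k$, and the reverse for odd $k$. This is why a subsequence appears in the statement.

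In the paper, nested subadditivity (valid along \emph{every} sequence) is combined in Step 2 with a diagonal extraction and a weak$^*$ limit measure $\mu$ to identify $I^{\{\varepsilon_k\}}_{\mathcal K}(u,\cdot)$ with $\mu$ on $\mathcal A(\omega)$. Your Step 1 could play this role if you make precise what ``behaves like a $\Gamma$-limit'' means. For instance, you could require that the values on $\mathcal R$ do not change under further extraction. Then the $\limsup$ can be replaced by a $\liminf$ on $\mathcal R$, which yields superadditivity there. You would still have to pass to general open sets by inner regularity, and that again needs the extraction-free nested subadditivity.

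Finally, you cannot ``extend the definition'' to general $A$ by inner regularity. $I^{\{\varepsilon_k\}}_{\mathcal K}(u,A)$ is already defined for every open $A$ by \eqref{IKunif}, so its inner regularity has to be proved.
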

 \begin{proof}
From the $\Phi$-growth condition (H2), we obtain that, for any not relabeled subsequence $\{\varepsilon_k\}_k,$
\begin{equation*}
 I^{\{\varepsilon_k\}}_\mathcal{K}(u, A) \le \, \beta\int_A (1 + \Phi(|\nabla_\alpha u|)) \, dx; 
\end{equation*}
therefore we just need to infer the existence of a suitable subsequence $\{\varepsilon_k\}_k$ for which the trace of $ I^{\{\varepsilon_k\}}_\mathcal{K}(u, \cdot)$ is a Radon measure. This can be shown in two steps.
\\
\\
{\sc step 1:} The first thing to be proved is that the nested subadditivity for the functional  $I^{\{\varepsilon_k\}}$ 
\begin{equation}
\label{subadditivity}
 I^{\{\varepsilon_k\}}_\mathcal{K}(u, A) \le \,  I^{\{\varepsilon_k\}}_\mathcal{K}(u, B) +  I^{\{\varepsilon_k\}}_\mathcal{K}(u, A \setminus \overline{C}),
\end{equation}
holds for all $A, B, C \in \mathcal{A}(\omega)$ such that $\overline{C} \subset B \subset A.$  We observe that the construction is very similar to the one in \cite[Lemma 3.1]{ELTZ}, thus we just write the main difference.

For a given $\eta > 0$ there exist sequences $\{u_k\}_k, \{v_k\}_k \subset W^1L^\Phi(\Omega; \mathcal{M})$ such that they are almost optimal for $I^{\{\varepsilon_k\}}_{\mathcal{K}}(u, B) $ and for $I^{\{\varepsilon_k\}}_{\mathcal{K}}(u, A \setminus \overline C)$, i.e. 
\[
\left \{
\begin{array}{lll}
 \!\!\! & \displaystyle \limsup_{k \rightarrow + \infty}  I_{\varepsilon_k}(u_k, B) \le \,  I^{\{\varepsilon_k\}}_\mathcal{K}(u, B) + \eta\\[2mm]
 \!\!\! & \displaystyle \limsup_{k \rightarrow + \infty}  I_{\varepsilon_k}(v_k, A \setminus \overline{C}) \le \,  I^{\{\varepsilon_k\}}_\mathcal{K}(u, A \setminus \overline{C}) + \eta.
 \end{array}
\right.
\]
Now, we define as in \cite{ELTZ} the set
\[
\mathcal{K}' := \{s \in \mathcal{M}: {\rm dist} (s, \mathcal{K}) \le \, 1\}.
\]
Which is a compact subset of $\mathcal{M}$

Take $L := {\rm dist} (C, \partial B),$ $M \in \mathbb{N}$ and define, for every $i \in \{0, \dots, M\}$
\[
B_i := \left \{
x_\alpha \in B: {\rm dist} (x_\alpha, \partial B) > \frac{i L}{M}
\right \}.
\]
while for every $i \in \{0, \dots, M-1\}$
set 
\[
S_i := B_i \setminus \overline{B_{i+1}}.
\]
Consider finally, for every $i \in \{0, \dots, M-1\}$, $\zeta_i \in \mathcal{C}_c^{\infty}(\Omega; [0,1])$ being a cut-off function satisfying 
\[
\zeta_i(x)=\zeta_i(x_\alpha) = \left \{
\begin{array}{lll}
\!\!\! & 1 \qquad & \textnormal{in $(B_{i + 1})_{,1}$}\\[3mm]
\!\!\! & 0 \qquad & \textnormal{in $\Omega \setminus (B_{i})_{,1}$} \qquad \textnormal{and} \qquad |\nabla \zeta_i|= |\nabla_\alpha \zeta_i|\le \, \frac{2 M}{L}
\end{array}
\right.
\]

Arguing exactly as in \cite{ELTZ} we can find 
a $\delta > 0,$ $c > 0$ and a uniformly continuously differentiable mapping $\Pi: D_{\delta} \times [0,1] \rightarrow \mathcal{M},$  (with partial derivatives uniformly bounded by construction (see \cite[(2.2). Lemma 3.2 and Remark 3.3]{DFMT}, where
\[
D_{\delta} := \{(s_0, s_1) \in \mathcal{M} \times \mathcal{M}: {\rm dist}(s_0, \mathcal{K}') < \delta, \,\,\, {\rm dist}(s_1, \mathcal{K}') < \delta, \,\,\, |s_0 - s_1| < \delta\},
\]
such that  \cite[eq. (3.4) and (3.5)]{ELTZ} hold, i.e.
\begin{equation*}
\Pi(s_0, s_1, 0) = s_0, \qquad \Pi(s_0, s_1, 1) = s_1, \qquad \frac{\partial \Pi}{\partial t} (s_0, s_1, t) \le \, c \, |s_0 - s_1|,
\end{equation*}
and
\begin{equation*}
|\Pi(s_0, s_1, t) - s_0| \le \, c |s_0 - s_1|\end{equation*}

\noindent Recalling the $L^\infty$ convergence of $\{u_k\}_k$ and $\{v_k\}_k$, and the properties of the sequence, i.e. the fact that
for a.e. $x \in \Omega,$ ${\rm dist}(v_k(x), \mathcal{K}') < \delta$ when $u(x_\alpha) \in \mathcal{K}'.$ 

Thus we can define $w_{k, i} \in W^1L^\Phi(\Omega; \mathcal{M})$ as follows
\[
w_{k, i}(x) := \left \{
\begin{array}{lll}
\!\!\! & \Pi(v_k(x), u_k(x), \zeta_i(x)) \qquad & \textnormal{if $u(x) \in \mathcal{K}'$}\\[3mm]
\!\!\! & u(x_\alpha) \qquad & \textnormal{if $u(x) \notin \mathcal{K}'$.}
\end{array}
\right.
\]
By exploiting the $\Phi$-growth condition (H2), the fact that $\Phi$ satisfies $\Delta_2$,  as well as \cite[(3.4)]{ELTZ}, we are able to deduce
\[
\begin{aligned}
\int_{A_{,1}} &f \left (\frac{x_\alpha}{\varepsilon_k}, x_3, \nabla_{\varepsilon_k} w_{k,i} \right ) \, dx \\
&\le \int_{B_{,1}} f \left (\frac{x_\alpha}{\varepsilon_k}, x_3, \nabla_{\varepsilon_k} u_k \right ) \, dx + \int_{A_{,1} \setminus \overline{C}_{,1}} f \left (\frac{x_\alpha}{\varepsilon_k}, x_3, \nabla_{\varepsilon_k} v_k \right ) \, dx \\
& + C_0 \int_{(S_i)_{,1}} (1 + \Phi(|\nabla_{\varepsilon_k} u_k|) + \Phi(|\nabla_{\varepsilon_k} v_k|) + \Phi(M |u_k - v_k|) \, dx,
\end{aligned}
\]
which holds for some constants $C_0 > 0$ independent of $i, k$ and $M,$ in view of $\Delta_2$ condition and the convexity of $\Phi$. Now, if we sum up over the index $i \in \{0, \dots, M-1\}$ and we divide by $M,$ we get
\[
\begin{aligned}
\frac{1}{M} &\sum_{i = 0}^{M - 1} \int_{A_{,1}} f \left (\frac{x_\alpha}{\varepsilon_k}, x_3, \nabla_{\varepsilon_k} w_{k,i} \right ) \, dx\\
&\le \int_{B_{,1}} f \left (\frac{x_\alpha}{\varepsilon_k},  x_3, \nabla_{\varepsilon_k} u_k \right ) \, dx + \int_{A_{,1} \setminus \overline{C}_{,1}} f \left (\frac{x_\alpha}{\varepsilon_k}, x_3, \nabla_{\varepsilon_k} v_k \right ) \, dx \\
& + \frac{C_0}{M} \int_{B_{,1} \setminus \overline{C}_{,1}} (1 + \Phi(|\nabla_{\varepsilon_k} u_k|) +\Phi( |\nabla_{\varepsilon_k} v_k|) +\Phi( M |u_k - v_k|) \, dx.
\end{aligned}
\]
Therefore it is possible to find some indices $i_k \in \{0, \dots, M-1\}$ such that $\overline{w}_k := w_{k, i_k}$ satisfies
\[
\begin{aligned}
\int_{A_{,1}} &f \left (\frac{x_\alpha}{\varepsilon_k}, x_3, \nabla_{\varepsilon_k} \overline{w}_{k} \right ) \, dx\\
&\le\int_{B_{,1}} f \left (\frac{x_\alpha}{\varepsilon_k}, x_3, \nabla_{\varepsilon_k} u_k \right ) \, dx + \int_{A_{,1} \setminus \overline{C}_{,1}} f \left (\frac{x_\alpha}{\varepsilon_k}, x_3,\nabla_{\varepsilon_k} v_k \right ) \, dx \\
&+ \frac{C_0}{M} \int_{B_{,1} \setminus \overline{C}_{,1}} (1 + \Phi(|\nabla_{\varepsilon_k} u_k|) + \Phi(|\nabla_{\varepsilon_k} v_k|) +\Phi( M |u_k - v_k|) \, dx.
\end{aligned}
\]
From \cite[(3.4) and (3.5)]{ELTZ}, we get that $\overline{w}_k \rightarrow u$ uniformly; moreover $\nabla\overline{w}_k \rightharpoonup \nabla_\alpha u$ in $L^\Phi(\Omega; \mathbb{R}^3)$, $\{\nabla_{\varepsilon_k} \bar{w}_k\}_k$ is bounded in $L^\Phi(\Omega,\mathbb R^3)$, and finally $\overline{w}_k(x) = u(x_\alpha)$ whenever dist$(u(x_\alpha), \mathcal{K}) > 1$ for a.e. $x \in \Omega.$ Finally, we can conclude that
\[
\begin{aligned}
&I_{\mathcal{K}}^{\{\varepsilon_k\}}(u, A)\\
&\le \limsup_{k \rightarrow + \infty} I^{\varepsilon_k}(\overline{w}_k, A) \\
&\le \limsup_{k \rightarrow + \infty} \Bigg \{ I_{\varepsilon_k}(u_k, B) + I_{\varepsilon_k}(v_k, A \setminus \overline{C}) \\
&\qquad + \frac{C_0}{M} \int_{B_{,1} \setminus \overline{C}_{,1}} (1 +  \Phi(|\nabla_{\varepsilon_k} u_k| + |\nabla_{\varepsilon_k} v_k| + M |u_k - v_k|) \, dx  \Bigg \}\\
&\le I_{\mathcal{K}}^{\{\varepsilon_k\}}(u, B) + I_{\mathcal{K}}^{\{\varepsilon_k\}}(u, A \setminus \overline{C}) + 2 \eta + \frac{C_0}{M} \sup_{k \in \mathbb{N}} \int_{B_{,1} \setminus \overline{C}_1} (1 +\Phi( |\nabla_{\varepsilon_k} u_k| + |\nabla_{\varepsilon_k} v_k|) \, dx.
\end{aligned}
\]
Taking the limit first as $M \rightarrow + \infty$ and then as $\eta \rightarrow 0,$ we obtain the desired subadditivity property \eqref{subadditivity}.

\noindent {\sc step 2:} At this point we can find some nonnegative Radon measure $\mu \in \mathcal{M}(\omega)$ and a subsequence $\{\varepsilon_{k}\}\searrow 0^{+}$, hence we can use standard diagonalization arguments, ensuring that $I_{\mathcal{K}}^{\{\varepsilon_{k}\}}(u,A)= \mu(A)$ for any $A\in\mathcal{A}(\omega)$ and $u \in W^{1}L^{\Phi}(\omega; \mathcal{M})$. However, this step is omitted since it is entirely similar to the second step of the proof in \cite[Lemma 3.1]{ELTZ}, simply replacing the classical Sobolev space  $W^{1,p}$ by Orlicz-Sobolev spaces $W^{1}L^{\Phi}$.

\end{proof}

\noindent
Now we can prove the $\limsup$-inequality for Theorem \ref{casopm1}.

\begin{prop}[$\Gamma$-limsup]
\label{Glimsup}
For every $u\in W^{1}L^\Phi(\omega; \mathcal{M})$ it holds
\begin{equation*}
    I(u)\ge \mathcal I(u),
\end{equation*}
where $I$ and $\mathcal I$ are defined by \eqref{candidatepm1} and \eqref{Io}, respectively.
\end{prop}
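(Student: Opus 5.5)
We follow the strategy of \cite[Proposition 3.3]{ELTZ}, adapted to the Orlicz setting. Since $\mathcal I(u,A)\le I^{\{\varepsilon_k\}}_{\mathcal K}(u,A)$ for every compact $\mathcal K\subset\mathcal M$ and every subsequence (uniform convergence implies $L^\Phi$ convergence on bounded sets), it suffices to show, for $u\in W^1L^\Phi(\omega;\mathcal M)$ identified with an element of $V(\omega)$ (Remark \ref{abuso}), that
\[
I^{\{\varepsilon_k\}}_{\mathcal K}(u,\omega)\le \int_\omega Tf^0_{\rm hom}(u,\nabla_\alpha u)\,dx_\alpha + o(1)\quad\text{as }\mathcal K\nearrow\mathcal M,
\]
and then let $\mathcal K$ exhaust $\mathcal M$. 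By Lemma \ref{preupperbound}, $I^{\{\varepsilon_k\}}_{\mathcal K}(u,\cdot)=\mu$ is a Radon measure, absolutely continuous with respect to $\mathcal L^2$. By the Besicovitch derivation theorem, the claim reduces to the pointwise inequality
\[
\frac{d\mu}{d\mathcal L^2}(x_0)\le Tf^0_{\rm hom}(u(x_0),\nabla_\alpha u(x_0))
\]
for a.e. $x_0$ with $u(x_0)\in\mathcal K$. On the complementary set $\{{\rm dist}(u,\mathcal K)>0\}$, the bound $\mu(A)\le\beta\int_A(1+\Phi(|\nabla_\alpha u|))$ makes the contribution vanish as $\mathcal K\nearrow\mathcal M$, by dominated convergence.

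\textbf{Step 1 (blow-up).} Fix a Lebesgue point $x_0$ of $u$ and $\nabla_\alpha u$ at which $u$ is $L^\Phi$-differentiable (as in the $W^{1,p}$ case, this holds a.e. by $\Delta_2$). Set $s_0=u(x_0)$ and $\xi_0=\nabla_\alpha u(x_0)\in[T_{s_0}\mathcal M]^2$. By Proposition \ref{characterization}(i), $Tf^0_{\rm hom}(s_0,\xi_0)=\bar f^0_{\rm hom}(s_0,\xi_0)$, and by Theorem \ref{intrep} applied to $W=\bar f(\cdot,\cdot,s_0,\cdot)$, this is the density of the unconstrained $\Gamma$-limit. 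Choose $\varphi\in W^{1,\infty}((tQ')_{,1};T_{s_0}\mathcal M)$, vanishing on the lateral boundary, that is nearly optimal in \eqref{Tfhom}, and extend it $tQ'$-periodically in $x_\alpha$.

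\textbf{Step 2 (projected recovery sequence).} On $Q'(x_0,\rho)_{,1}$ we define
\[
u_k(x):=\pi\Big(u(x_\alpha)+\varepsilon_k\varphi\big(\tfrac{x_\alpha}{\varepsilon_k},x_3\big)\Big),
\]
where $\pi$ is the nearest-point projection onto $\mathcal M$, well defined in a tubular neighbourhood of $\mathcal K'$. We glue with $u$ outside $Q'(x_0,\rho)$ via a cut-off $\zeta$, to respect the constraint $u_k=u$ where ${\rm dist}(u,\mathcal K)>1$. Then $u_k\to u$ uniformly, because $\varepsilon_k\|\varphi\|_\infty\to0$. Moreover
\[
\nabla_{\varepsilon_k}u_k=D\pi\big(\cdots\big)\big[\nabla_\alpha u+\nabla_\alpha\varphi\,\big|\,\nabla_3\varphi\big],
\]
so the rescaled gradients are bounded in $L^\Phi$ since $\varphi$ is Lipschitz. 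Because $D\pi(s_0)=P_{s_0}$ and $\varphi$ takes values in $T_{s_0}\mathcal M$, we have $\nabla_{\varepsilon_k}u_k\approx\mathbf P_{s_0}[\xi_0+\nabla_\alpha\varphi|\nabla_3\varphi]$ near $x_0$, with an error controlled by $|u(x_\alpha)-s_0|+\varepsilon_k$ and $|\nabla_\alpha u-\xi_0|$.

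\textbf{Step 3 (energy estimate).} We estimate $\mu(Q'(x_0,\rho))/\rho^2$ by $\limsup_k \rho^{-2}I_{\varepsilon_k}(u_k,Q'(x_0,\rho))$. The error terms are handled by the continuity of $f$ in $\xi$ together with the local Lipschitz bound of Proposition \ref{characterization}(iii)-type, namely $|f(\xi)-f(\xi')|$ controlled via $\Delta_2$ and $\phi$. The terms involving $|\nabla_\alpha u-\xi_0|$ and $|u-s_0|$ vanish in the average as $\rho\to0$, by the Lebesgue point property combined with equi-integrability from $\Delta_2$. The periodicity of $f$ and of $\varphi$ in $x_\alpha$, via the Riemann--Lebesgue lemma for periodic functions, gives convergence of the main term to $t^{-2}\int_{(tQ')_{,1}}f(y,\xi_0+\nabla_\alpha\varphi|\nabla_3\varphi)$. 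This is $\le Tf^0_{\rm hom}(s_0,\xi_0)+\eta$, and we then let $\eta\to0$.

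The main obstacle is Step 3. The integrand $f$ is merely Carathéodory with no continuity in $\xi$ uniform in $y$, and the oscillating argument $\nabla_\alpha u(x_\alpha)$ is not constant. The standard fix is to first approximate $u$ by piecewise-affine or smooth maps (using that $\mu$ is the restriction of a measure, so approximation in energy is compatible), or alternatively to replace $f$ by $\bar f$, which is continuous in $(s,\xi)$ and Lipschitz-type by rank-one convexity of its homogenization. Both versions need the $\Delta_2\cap\nabla_2$ structure to convert pointwise gradient closeness into convergence of the $\Phi$-energies, via Vitali and the modular–norm equivalence.
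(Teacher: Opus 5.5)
Your reduction matches the paper's. You pass to $I^{\{\varepsilon_k\}}_{\mathcal K}$ with $\mathcal K=\mathcal M\cap B^3(0,R)$, use Lemma \ref{preupperbound} to reduce to a density bound at $\mathcal L^2$-a.e.\ $x_0$, handle $u(x_0)\notin\mathcal K$ with the growth bound, and let $R\to\infty$. For $s_0=u(x_0)\in\mathcal K$ the paper only refers to \cite{ELTZ}, so the substance of your proposal is Steps 2--3, and there the key step fails as written. In Step 3 you control the difference between $f(\frac{x_\alpha}{\varepsilon_k},x_3,\nabla_{\varepsilon_k}u_k)$ and $f(\frac{x_\alpha}{\varepsilon_k},x_3,[\xi_0+\nabla_\alpha\varphi|\nabla_3\varphi])$ by a local Lipschitz bound ``of Proposition \ref{characterization}(iii)-type'' for $f$. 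No such bound holds. Item (iii) is a property of $Tf^0_{\rm hom}$ and comes from its rank-one convexity, whereas $f$ is merely Carath\'eodory, nonconvex, and has no modulus of continuity in $\xi$ uniform in $y$. You notice this at the end, but neither remedy works. Smooth $\mathcal M$-valued maps are not dense in $W^1L^\Phi(\omega;\mathcal M)$ in general, e.g.\ for $\Phi(t)=t^p$ with $1<p<2$ and $\pi_1(\mathcal M)\neq 0$; piecewise affine maps are not $\mathcal M$-valued at all. Even for smooth $u$ the gradient is not constant near $x_0$, so the same comparison reappears. Passing to $\bar f$ does not help either: $\bar f$ is no more Lipschitz in $\xi$ than $f$; only its homogenization $\bar f^0_{\rm hom}$ is.

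The missing ingredient is the truncation/Scorza-Dragoni/equi-integrability argument the paper itself uses in Step 3 of Proposition \ref{liminf}. Take $x_0$ also to be a Lebesgue point of $\Phi(|\nabla_\alpha u|)$, so that $\Phi(|\nabla_\alpha u(x_0+\rho\,\cdot)|)$ is equi-integrable on $Q'$ as $\rho\to0$; this comes from the Lebesgue point property, not from $\Delta_2$. Then $\Delta_2$ and $|\nabla_{\varepsilon_k}u_k|\le C(1+|\nabla_\alpha u|)$ transfer equi-integrability to $\Phi(|\nabla_{\varepsilon_k}u_k|)$ after blow-up. There, $\nabla_{\varepsilon_k}u_k-[\xi_0+\nabla_\alpha\varphi|\nabla_3\varphi](\frac{x_\alpha}{\varepsilon_k},x_3)\to0$ in measure (first $k\to\infty$, then $\rho\to0$). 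Scorza-Dragoni on the periodicity cell gives a compact $K_\eta$, with complement of measure at most $\eta$, such that $f$ is uniformly continuous on $K_\eta\times\{|\xi|\le M\}$. The Riemann--Lebesgue lemma bounds the measure of the set where $(\frac{x_\alpha}{\varepsilon_k},x_3)\notin K_\eta^{\rm per}$, and equi-integrability disposes of that set and of $\{|\nabla_{\varepsilon_k}u_k|>M\}$. Only then does Riemann--Lebesgue give the cell average; take $t\in\mathbb N$ so that the $tQ'$-periodic extension of $\varphi$ is compatible with the $1$-periodicity of $f$. Separately, your gluing is misplaced. The requirement $u_k=u$ where $\mathrm{dist}(u(x_\alpha),\mathcal K)>1$ is a condition in the target, which a spatial cut-off outside $Q'(x_0,\rho)$ does not enforce. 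Moreover, $\pi(u+\varepsilon_k\varphi)$ may be undefined where $u$ is far from $\mathcal K$, since the tubular neighbourhood of a noncompact $\mathcal M$ can be arbitrarily thin. Use instead $u_k:=\pi\big(u+\varepsilon_k\theta(u)\varphi(\frac{x_\alpha}{\varepsilon_k},x_3)\big)$ with $\theta\in C^\infty_c(\mathbb R^3;[0,1])$, $\theta=1$ on $\mathcal K$ and $\mathrm{supp}\,\theta\subset\{\mathrm{dist}(\cdot,\mathcal K)<1\}$. No spatial gluing is needed, because you only use $\mu(Q'(x_0,\rho))\le\limsup_k I_{\varepsilon_k}(u_k,Q'(x_0,\rho))$.
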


\begin{proof}
Let $u \in W^{1}L^\Phi(\omega; \mathcal{M}).$ Consider $R > 0$ arbitrarily large, set
\[
\mathcal{K} := \mathcal{M} \cap B^3(0,R),
\]
where we recall that the latter denotes the closed ball centered at $0$ with radius $R$, 
and consider the sequence $\{\varepsilon_k\}_k$ given by Lemma \ref{preupperbound}. It is clear that
\[
{\mathcal I}(u) \le \, I^{\{\varepsilon_k\}}_\mathcal{K}(u, \omega).
\]
Now, we would like to show that
\begin{equation}
\label{(4.1)}
I^{\{\varepsilon_k\}}_\mathcal{K}(u, \omega) \le \int_{\omega} \left \{ \chi_R(|u|) T f_{\rm hom}^0 (u, \nabla_\alpha u) + \beta (1 - \chi_R(|u|)) (1 + \Phi(|\nabla_\alpha u|))\right \} \, dx_\alpha,
\end{equation}
where 
\[
\chi_R(t) = 
\left \{
\begin{array}{lll}
\!\!\! 1 \qquad \textnormal{for $t \le R$}\\
\\
\!\!\! 0 \qquad \textnormal{otherwise}
\end{array}
\right.
\]
In order to deduce \eqref{(4.1)}, it is enough to prove that
\[
\frac{d I^{\{\varepsilon_k\}}_\mathcal{K}(u, \cdot) }{d \mathcal{L}^2} (x_0) \le \, \chi_R(|u(x_0)|) T f_{\rm hom}^0 (u(x_0), \nabla_\alpha u(x_0)) + \beta (1 - \chi_R(|u(x_0)|) (1 + \Phi(|\nabla_\alpha u(x_0)|)),
\]
for $\mathcal{L}^2-$a.e. $x_0 \in \omega.$
\\
Let us consider $x_0 \in \omega$ to be a Lebesgue point of $u$ and $\nabla_\alpha u$, and $\Phi(|\nabla_\alpha u|)$ such that $u(x_0) \in \mathcal{M},$ $\nabla_\alpha u(x_0) \in [T_{u(x_0)}(\mathcal{M})]^2,$ and the Radon-Nykod\'ym derivative of $I^{\{\varepsilon_k\}}_\mathcal{K}(u, \cdot)$ with respect to the Lebesgue measure $\mathcal{L}^2$ exists. We observe that almost every point in $\omega$ satisfies these properties, see \cite[Corollary 2.23]{AFP}. Moreover let us set 
\[
s_0 := u(x_0) \qquad \textnormal{and} \qquad \xi_0 := \nabla_\alpha u(x_0).
\]
Assume first that $s_0 \notin \mathcal{K}.$ Then, using (H2), we obtain that 
\begin{eqnarray}
\nonumber
\frac{d I^{\{\varepsilon_k\}}_\mathcal{K}(u, \cdot) }{d \mathcal{L}^2} (x_0) &= &\lim_{\rho \rightarrow 0^+} \frac{I^{\{\varepsilon_k\}}_\mathcal{K}(u, Q'(x_0, \rho))}{\rho^2} \le \, \limsup_{\rho \rightarrow 0^+} \limsup_{k \rightarrow + \infty} \rho^{-2} I_{\varepsilon_k}(u, Q'(x_0, \rho))\nonumber \\
&\le & \, \lim_{\rho \rightarrow 0^+} \frac{\beta}{\rho^2} \int_{Q'(x_0, \rho)} (1 + \Phi(|\nabla_\alpha u|)) \, dx = \beta (1 + \Phi(|\xi_0|)),
\end{eqnarray}
which is what we would like to prove.\\

\noindent
If instead $s_0 \in \mathcal{K},$ then, the proof is identical to the one given in \cite[Proposition 3.1]{ELTZ}, just replacing the estimates obtained in term of the $p$-growth condition (see \cite[eq (3.15) and (3.16)]{ELTZ}) by the growth condition (H2) and replacing $|\cdot|^p$ by $\Phi(|\cdot|)$. Thus we omit it.

\end{proof}

    \noindent
Now we prove the $\liminf$-inequality for Theorem \ref{casopm1}.
\begin{prop}[$\Gamma$-liminf]\label{liminf}
    For every $u \in W^{1}L^\Phi(\omega; \mathcal{M})$ it holds
    \begin{equation*}
        I(u)\le \mathcal I(u),
    \end{equation*}
    where $I$ is defined in \eqref{candidatepm1}, while $\mathcal I$ is defined in \eqref{Io}.
\end{prop}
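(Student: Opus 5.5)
The natural route to the $\Gamma$-liminf is to compare the constrained functional with an unconstrained one built from the perturbed integrand $\bar f$ of \eqref{perturbedf}, and then use the unconstrained representation results of Section \ref{mainsect} together with Proposition \ref{characterization}(i). Fix $u\in W^1L^\Phi(\omega;\mathcal M)$ and a sequence $u_\varepsilon\to u$ in $L^\Phi(\Omega)$ with $\liminf_\varepsilon I_\varepsilon(u_\varepsilon)=\mathcal I(u)<+\infty$; otherwise there is nothing to prove. We may assume, up to a subsequence, that the $\liminf$ is a limit, so that $u_\varepsilon\in W^1L^\Phi(\Omega;\mathcal M)$ and $\sup_\varepsilon I_\varepsilon(u_\varepsilon)<\infty$. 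By (H2), $\{\Phi(|\nabla_\varepsilon u_\varepsilon|)\}$ is bounded in $L^1$. By Theorem \ref{thmcompact} and the argument recalled at the beginning of Section \ref{mainsect}, $\nabla_3 u=0$, so $u$ may be regarded as an element of $W^1L^\Phi(\omega;\mathcal M)$.

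\textbf{Step 1: from $f$ to $\bar f$ along the sequence.} Since $u_\varepsilon(x)\in\mathcal M$ a.e., the rows of $\nabla_\varepsilon u_\varepsilon(x)$ lie in $T_{u_\varepsilon(x)}\mathcal M$. Hence $\mathbf P_{u_\varepsilon(x)}(\nabla_\varepsilon u_\varepsilon)=\nabla_\varepsilon u_\varepsilon$, and
\[
f\Big(\tfrac{x_\alpha}{\varepsilon},x_3,\nabla_\varepsilon u_\varepsilon\Big)=\bar f\Big(\tfrac{x_\alpha}{\varepsilon},x_3,u_\varepsilon(x),\nabla_\varepsilon u_\varepsilon\Big).
\]
The difficulty is that $\bar f$ depends on $s=u_\varepsilon(x)$, which varies. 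I will freeze $s$ by a blow-up argument. Set $\mu_\varepsilon:=f(\cdot,\nabla_\varepsilon u_\varepsilon)\mathcal L^3\llcorner\Omega$ and project it to $\omega$. Up to a subsequence it converges weakly* to a measure $\mu$ on $\omega$. It then suffices to show that, for $\mathcal L^2$-a.e. $x_0$,
\[
\frac{d\mu}{d\mathcal L^2}(x_0)\ge Tf^0_{\rm hom}\big(u(x_0),\nabla_\alpha u(x_0)\big).
\]
For this I choose $x_0$ to be a Lebesgue point of $u$, $\nabla_\alpha u$ and $\Phi(|\nabla_\alpha u|)$, with $s_0:=u(x_0)\in\mathcal M$ and $\xi_0:=\nabla_\alpha u(x_0)\in[T_{s_0}\mathcal M]^2$. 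I then pick radii $\rho$ with $\mu(\partial Q'(x_0,\rho))=0$ and rescale: $w_{\varepsilon,\rho}(y):=\big(u_\varepsilon(x_0+\rho y_\alpha,y_3)-s_0\big)/\rho$ on $Q$. After a diagonal choice $\varepsilon=\varepsilon(\rho)$ with $\varepsilon/\rho\to0$, these maps converge in $L^\Phi$ to the affine map $\xi_0 y_\alpha$, because $u$ is differentiable in the $L^\Phi$-average sense at $x_0$. This is where the Orlicz analogue of the $L^1$/approximate differentiability at Lebesgue points is used. The rescaled energy is $\int_Q f(\cdot/(\varepsilon/\rho),y_3,\nabla_{\varepsilon}w)\,dy$.

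\textbf{Step 2: replacing $s=u_\varepsilon$ by $s_0$.} This is the main obstacle. In \cite{ELTZ} it is handled with the uniform continuity of $s\mapsto P_s$ on compacts, plus a truncation so that $|u_\varepsilon(x)-s_0|$ is small outside a set of small measure. I will do the same. Since $u_\varepsilon\to u$ in measure and $u$ has a Lebesgue point at $x_0$, the set $\{|u_\varepsilon-s_0|>\delta\}\cap Q'(x_0,\rho)_{,1}$ has measure $o(\rho^2)$. On its complement I use the pointwise inequality
\[
\bar f(y,s_0,\xi)\le f(y,\mathbf P_{s_0}\xi)+\Phi(|\xi-\mathbf P_{s_0}\xi|),
\]
with $\xi=\nabla_\varepsilon u_\varepsilon\in[T_{u_\varepsilon}\mathcal M]^3$, so that $|\xi-\mathbf P_{s_0}\xi|\le C\delta|\xi|$. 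Combining this with the local Lipschitz estimate for $f$ in $\xi$ is delicate, since only Carath\'eodory regularity is assumed. I would therefore first replace $f$ by its quasiconvex (or convexified) unconstrained envelope, which satisfies the Lipschitz bound of Proposition \ref{characterization}(iii) type. The error is then controlled by $C\delta(1+\phi(|\xi|))|\xi|\le C\delta(1+\Phi(2|\xi|))$ via $\Delta_2$. For the bad set, equi-integrability of $\Phi(|\nabla_\varepsilon u_\varepsilon|)$ is not available. I would handle it with a Fonseca–M\"uller–Pedregal decomposition lemma in Orlicz form, replacing $u_\varepsilon$ by a sequence with $\Phi$-equi-integrable gradients, which is valid thanks to $\nabla_2$, so that the bad set contributes $o(\rho^2)$. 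This produces
\[
\frac{d\mu}{d\mathcal L^2}(x_0)\ge\liminf \int_Q \bar f\Big(\tfrac{\rho y_\alpha}{\varepsilon},y_3,s_0,\nabla_{\varepsilon}w_{\varepsilon,\rho}\Big)dy-C\delta .
\]

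\textbf{Step 3: conclusion.} Now $\bar f(\cdot,\cdot,s_0,\cdot)$ is a fixed Carath\'eodory, periodic integrand with $\Phi$-growth by \eqref{pgrowth}. Theorem \ref{intrep}, applied with $W=\bar f(\cdot,\cdot,s_0,\cdot)$ on $Q'$ at the affine limit, gives that the right-hand side is at least $\bar f^0_{\rm hom}(s_0,\xi_0)-C\delta$. Here I use that the ratio $\varepsilon/\rho\to0$ along a sequence still gives the same homogenized density, since the $\Gamma$-limit is independent of the chosen sequence. By Proposition \ref{characterization}(i), $\bar f^0_{\rm hom}(s_0,\xi_0)=Tf^0_{\rm hom}(s_0,\xi_0)$. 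Letting $\delta\to0$ and integrating over $\omega$ gives $\mathcal I(u)\ge\mu(\omega)\ge\int_\omega Tf^0_{\rm hom}(u,\nabla_\alpha u)\,dx_\alpha=I(u)$.
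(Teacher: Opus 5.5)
Your architecture matches the paper's: blow-up at a Lebesgue point, the identity $f=\bar f(\cdot,u_\varepsilon,\cdot)$ along $\mathcal M$-valued maps, the Orlicz decomposition lemma, freezing $s=s_0$, then Theorem \ref{intrep} and Proposition \ref{characterization}(i). You also correctly locate the crux in the freezing step, but the tool you propose there fails.

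Replacing $f$ by its unconstrained quasiconvex (a fortiori convex) envelope $Qf$ only uses $f\ge Qf$. So what you can actually freeze is $\overline{Qf}(y,s_0,\xi)=Qf(y,\mathbf P_{s_0}\xi)+\Phi(|\xi-\mathbf P_{s_0}\xi|)$, not $\bar f(y,s_0,\xi)$. There is no way back, because $Qf(y,\mathbf P_{s_0}\xi)\le f(y,\mathbf P_{s_0}\xi)$. Theorem \ref{intrep} and Proposition \ref{characterization}(i) then give only $\frac{d\mu}{d\mathcal L^2}(x_0)\ge T(Qf)^0_{\rm hom}(s_0,\xi_0)$. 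This can be strictly smaller than $Tf^0_{\rm hom}(s_0,\xi_0)$, because $Qf$ is computed with test fields that leave $T_{s_0}(\mathcal M)$. For example, let $\mathcal M$ be a circle with unit tangent $e$ at $s_0$, $w\perp e$ a unit vector, $b\neq0$, $A:=w\otimes b$, and $f(\xi):=\min\{|\xi-A|^2,|\xi+A|^2\}+\epsilon|\xi|^2$. This $f$ satisfies (H1)--(H2) with $\Phi(t)=t^2/2$. Lamination between the rank-one connected wells $\pm A$ gives $Qf(0)\le\epsilon|b|^2$, hence $T(Qf)^0_{\rm hom}(s_0,0)\le\epsilon|b|^2$ (take $\varphi=0$). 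On the other hand, every matrix in $[T_{s_0}(\mathcal M)]^3$ has the form $e\otimes a$, and $f(e\otimes a)=(1+\epsilon)|a|^2+|b|^2$, so $Tf^0_{\rm hom}(s_0,0)\ge|b|^2$.

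What is actually needed is uniform continuity of $f(y,\cdot)$ on bounded sets, uniformly in $y$ outside a small set; no envelope and no Lipschitz bound in $\xi$ are required. This is how the paper proceeds in its Step 3, after the decomposition lemma:
\begin{itemize}
\item Scorza--Dragoni provides a compact $K_\eta$ with $\mathcal L^3(\overline{Q'_{,1}}\setminus K_\eta)\le\eta$ on which $f$ is continuous.
\item On $E_{M,k}\cap\delta_kK^{per}_\eta$, where gradients are bounded by $M$, the modulus $\Psi_{\eta,M}$ yields $|\bar f(x,s_0+\rho_kw_k,\xi)-\bar f(x,s_0,\xi)|\le\tilde\Psi(|\mathbf P_{s_0+\rho_kw_k}-\mathbf P_{s_0}|)$. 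Its integral vanishes because $\rho_kw_k\to0$ in measure and $\tilde\Psi$ is bounded and continuous at $0$.
\item The complement of $\delta_kK^{per}_\eta$ costs at most $C(1+\Phi(M))\eta$, by the Riemann--Lebesgue lemma.
\item The set $\{|\nabla_{\delta_k}\bar w_k|>M\}$ is negligible uniformly in $k$ as $M\to\infty$, by equi-integrability.
\end{itemize}
In this way the frozen integrand is exactly $\bar f(\cdot,\cdot,s_0,\cdot)$. Your good/bad splitting according to $|u_\varepsilon-s_0|$ is compatible with this, but the comparison of $f$ at $\xi$ and at $\mathbf P_{s_0}\xi$ must be done this way.

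A smaller omission: your rescaled integrand must be evaluated at $(x_0+\rho y_\alpha)/\varepsilon$, not at $\rho y_\alpha/\varepsilon$. The shift $x_0/\varepsilon$ has to be reduced modulo $\mathbb Z^2$ by periodicity and absorbed by a vanishing translation of the blow-up maps, again controlled by equi-integrability (the paper's Step 2). Only then can Theorem \ref{intrep}, which is stated without shifts, be applied.
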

\begin{proof}[Proof]

\noindent
{\sc step 1.} Fix $u\in W^{1}L^\Phi(\omega;\mathcal{M})$. We consider a recovery sequence $\{u_n\}_n\subset W^{1}L^\Phi(\Omega; \mathcal{M})$ related to $\mathcal I(u,\omega)$. We define the sequence of non-negative Radon measure
    \[
\mu_n:=\left(\int_{(-\frac{1}{2}, \frac{1}{2})}f\left(\frac{(\cdot)_\alpha}{\varepsilon_n}, x_3, \nabla_{\varepsilon_n}u_n\right)dx_3\right)\mathcal{L}^{2}\lfloor\omega.
    \]
    Up to a subsequence, there exists a Radon measure $\mu\in \mathcal{M}(\omega)$ such that $\mu_n\rightharpoonup^*\mu$
    in $\mathcal{M}(\omega)$. 
    By Lebesgue Differentiation Theorem we can split $\mu$ into the sum of two mutually disjoint non-negative Radon measure $\mu^a$ and $\mu^s$. In particular, $\mu^a<<\mathcal{L}^{2}$, while $\mu^s$ is singular with respect to $\mathcal{L}^{2}$. By weak * convergence $\mu^a(\Omega)\le\mu(\Omega)\le \mathcal I(u)$ so we want to prove that
    \[
    \frac{d\mu}{d\mathcal{L}^{2}}(y_0)\ge Tf^0_{\rm hom}(u(y_0),\nabla_\alpha u(y_0)) \quad \text{for }\mathcal{L}^{2}-\text{ a.e. } y_0\in\omega.
    \]

    \noindent
    Let $y_0\in\omega$ be a Lebesgue point for $u$ and $\nabla_\alpha u$ and a point of of approximate differentiability for $u$, i.e. such that $u(y_0)\in \mathcal{M}$ and $\nabla_\alpha u(y_0)\in [T_{u(y_0)}(\mathcal{M})]^{2}$, and such that the Radon-Nykod\'ym derivative of $\mu$ with respect to $\mathcal{L}^{2}$ exists and it is finite. We define $s_0:=u(y_0)$ and $\xi_0:=\nabla_\alpha u(y_0)$ and we consider a vanishing sequence $\{\rho_k\}_k\subset (0,+\infty)$ such that $\mu(\partial Q' (y_0,\rho_k))=0$ for every $k\in\mathbb N$. By definition \eqref{perturbedf} of $\Bar{f}$ we get
    \begin{eqnarray*}
        \frac{d\mu}{d\mathcal{L}^{2}}(y_0)&=&\lim_{k\to +\infty}\frac{\mu(Q'(y_0,\rho_k))}{\rho_k^{2}}\nonumber\\
        &=&\lim_{k\to +\infty}\,\lim_{n\to +\infty} \frac{\mu_n(Q' (y_0,\rho_k)_{1})}{\rho_k^{2}}\nonumber\\
        &=&\lim_{k\to +\infty}\,\lim_{n\to +\infty}\int_{Q'_{,1}}f\left(\frac{y_0+\rho_ky_\alpha}{\varepsilon_n},y_3, \nabla_{\varepsilon_n}u_n(y_0+\rho_ky_\alpha, y_3)\right)dy\nonumber\\
        &=&\lim_{k\to +\infty}\,\lim_{n\to +\infty}\int_{Q'_{,1}} \Bar{f}\left(\frac{y_0+\rho_ky_\alpha}{\varepsilon_n},y_3, u_n(y_0+\rho_ky_\alpha, y_3), \nabla_{\varepsilon_n}u_n(y_0+\rho_ky_\alpha, y_3)\right)dy\nonumber\\
        &=&\lim_{k\to +\infty}\,\lim_{n\to +\infty}\int_{Q'_{,1}} \Bar{f}\left(\frac{y_0+\rho_ky_\alpha}{\varepsilon_n},y_3, s_0+v_{n,k}(y), \nabla_{\frac{\varepsilon_k}{\rho_k}}v_{n,k}(y)\right)dy,\nonumber
    \end{eqnarray*}
    with $v_{n,k}(y):=\frac{\left(u_n(y_0+\rho_ky_\alpha, y_3)-s_0\right)}{\rho_k}.$ 
\\
Since $y_0$ is a point of approximate differentiability  and $u_n\to u$ in $L^\Phi(\Omega, \mathbb R^3)$ it follows that
\[
\begin{aligned}
    &\lim_{k\to \infty} \lim_{n\to \infty} \int_{Q'_{,1}} \Phi(|v_{n,k}(y)-\xi_0y_\alpha|)dy= \\
    &\lim_{k\to \infty}\int_{Q'(y_0, \rho_k)}\frac{1}{\rho_k^2}\Phi\left(\frac{|u(y)-s_0-\xi_0(y_\alpha-y_0)|}{\rho_k}\right)dy_\alpha=0.
\end{aligned}
\]

We remark that the approximate differentiability is ensured by the results contained in \cite{AC2005}. In fact, this depends on the behaviour of $\Phi$ at $\infty$ (as in the $p$-growth case there are different properties according to the relations between $p$ and the space dimension). In one case, one applies \cite[Theorem 1.4]{AC2005}, taking also into account that $L^{\Phi_3}(\Omega)$ is continuously embedded in $L^{\Phi}(\Omega)$ ($\Phi_3$ being the {\it Sobolev conjugate} of $\Phi$, according to \cite[(1.8)]{AC2005}), and the fact that $\Phi$ can be always modified around the origin, obtaining an equivalent Young function. In the other case, one invokes  \cite[Lemma 3.2 and Theorem 1.1]{AC2005}, since our domain is bounded.

Therefore it is possible to find a diagonal sequence $\varepsilon_k := \varepsilon_{n_k} < \rho_k^2$ such that, by setting $v_k(y) := v_{n_k, k} (y)$ with $y\in\Omega$, $v_0(y_\alpha) := \xi_0 y_\alpha$ with $y_\alpha\in\omega$, then $v_k \rightarrow v_0$ in $L^\Phi(Q'_1; \mathbb{R}^{3})$ and
\begin{equation}
\label{(5.1)}
\frac{d \mu}{d \mathcal{L}^{2}}(y_0) = \lim_{k \rightarrow + \infty} \int_{Q'_{,1}} \Bar{f} \left (\frac{y_0 + \rho_k y_\alpha}{\varepsilon_k},y_3, s_0 + \rho_k v_k(y), \nabla_{\frac{\varepsilon_k}{\rho_k}} v_k(y) \right ) \, dy.
\end{equation}
At this point, we observe that $\{\nabla_{\varepsilon_k} v_k\}_k$ is bounded in $L^\Phi(Q'_{,1}; \mathbb{R}^{3\times3})$ thanks to the coercivity condition (H2). By using the Decomposition Lemma in the Sobolev-Orlicz setting \cite[Theorem 1.1]{KoZa2017}, it is possible to find a sequence $\{\bar{v}_k\}_k \subset W^{1, \infty}(Q'_{,1}; \mathbb{R}^{3})$ such that $\bar{v}_k = v_0$ on a neighborhood of $\partial (Q')_{,1},$ $\bar{v}_k \rightarrow v_0$ in $L^\Phi(Q'_1; \mathbb{R}^{3}),$ the sequence of gradients $\{\Phi(|\nabla_{\varepsilon_k} \bar{v}_k|)\}_k$ is equi-integrable, \color{black} and
\begin{eqnarray}
&& \lim_{k \rightarrow + \infty} \int_{Q'_{,1}} \Bar{f} \left (\frac{y_0 + \rho_k y_\alpha}{\varepsilon_k},y_3, s_0 + \rho_k v_k(y), \nabla_{\frac{\varepsilon_k}{\rho_k}}  v_k(y) \right ) \, dy \nonumber \\
&\ge& \, \limsup_{k \rightarrow + \infty} \int_{Q'_{,1}} \Bar{f} \left (\frac{y_0 + \rho_k y_\alpha}{\varepsilon_k},y_3, s_0 + \rho_k v_k(y), \nabla_{\frac{\varepsilon_k}{\rho_k}} \bar{v}_k(y) \right ) \, dy. \label{(5.2)}
\end{eqnarray}

\vspace{5mm}

\noindent{\sc step 2.} Let us set
\[
\frac{y_0}{\varepsilon_k} = m_k + s_k \qquad \textnormal{with} \,\,\, m_k \in \mathbb{Z}^{2} \,\,\, \textnormal{and} \,\,\, s_k \in [0,1)^{2}.
\]
We can introduce
\[
x_k := \frac{\varepsilon_k}{\rho_k} s_k \rightarrow 0 \qquad \textnormal{and} \qquad \delta_k := \frac{\varepsilon_k}{\rho_k} \rightarrow 0.
\]
We can exploit the 1-periodicity of $\Bar{f}$ with respect to its first variable, \eqref{(5.1)} and \eqref{(5.2)} to get
\begin{eqnarray}
\frac{d \mu}{d \mathcal{L}^{2}}(x_0) &\ge & \, \limsup_{k \rightarrow + \infty} \int_{Q'_{,1}}\Bar{f} \left (\frac{x_k + y_\alpha}{\delta_k},y_3, s_0 + \rho_k v_k(y), \nabla_{\delta_k}  \bar{v}_k(y) \right ) \, dy \label{(5.3)} \\
&\ge &\limsup_{k \rightarrow + \infty} \int_{x_k +Q'_{,1}}\Bar{f} \left (\frac{y_\alpha}{\delta_k},y_3, s_0 + \rho_k v_k(y_\alpha - x_k, y_3), \nabla_{\delta_k} \bar{v}_k(y_\alpha - x_k, y_3) \right ) \, dy. \nonumber 
\end{eqnarray}
At this point, we extend $v_k$ as its limit (up to fixing $v_k$ at the boundary of $\partial \omega \times (-1,1)$) and $\bar{v}_k$ by $v_0$ to  $\mathbb{R}^{2}\times (-1,1).$ \color{black} As long as $x_k \rightarrow 0,$ we deduce that $\mathcal{L}^{3} ((Q'_{,1} - x_k) \Delta Q'_{,1}) \rightarrow 0,$ and the equi-integrability of $\{\Phi(|\nabla_{\varepsilon_k} \bar{v}_k|)\}_k$ together with the growth condition (H2) implies
\begin{eqnarray*}
&& \int_{Q'_{,1} \Delta (x_k + Q'_{,1})} \Bar{f} \left (\frac{y_\alpha}{\delta_k},y_3, s_0 + \rho_k v_k(y_\alpha - x_k, y_3), \nabla_{\delta_k} \bar{v}_k(y_\alpha - x_k,y_3) \right ) \, dy \\
&\le & \, \beta' \int_{Q'_{,1} \Delta (Q'_{,1})-x_k} (1 + \Phi(|\nabla_{\delta_k} \bar{v}_k| ) \, dy \rightarrow 0.
\end{eqnarray*}
Therefore \eqref{(5.3)} entails 
\[
\frac{d \mu}{d \mathcal{L}^{2}} (y_0) \ge \, \limsup_{k \rightarrow + \infty} \int_{Q'_{,1}} \Bar{f} \left (\frac{y_\alpha}{\delta_k},y_3, s_0 + \rho_k w_k, \nabla_{\delta_k} \bar{w}_k \right ) \, dy,
\]
where $w_k(y) := v_k (y_\alpha - x_k, y_3)$ and $\bar{w}_k(y) := \bar{v}_k(y_\alpha - x_k, y_3)$ converge to $v_0$ in $L^\Phi(Q'_{,1}; \mathbb{R}^{3}),$ and $\{\Phi(|\nabla \bar{w}_k|)\}_k$ is equi-integrable as well.
\vspace{5mm}

\noindent{\sc step 3.} 
Fixed $M>0$, we denote by $E_{M,k}$ the set
\[
E_{M,k}:=\left\{ x\in Q'_{,1}: |\nabla_{\varepsilon_k} w_k|\le M \right\}.
\]
By Chebyschev inequality, and up to substitute $\Phi$ by an equivalent Orlicz function, strictly monotone, we have that $\mathcal{L}^3(Q'_1\setminus E_{M,k})\le \frac{C}{\Phi(M)}$ for some constant $C>0$. By Scorza-Dragoni Theorem, fixed $\eta>0$ there exists a compact set $K_\eta \subset \overline{Q'_{,1}}$ such that $\mathcal{L}^3(\overline{Q'_{,1}}\setminus K_\eta)\le \eta$ and such that $f:K_\eta\times\mathbb R^{3\times 3}\to \mathbb R$ is continuous. It follows that $\Bar{f}(\cdot, s, \cdot):K_\eta\times B^{3\times 3}(0,M)\to\mathbb R$ is uniformly continuous for every $s\in\mathbb R^3$. Moreover, the function
\[
\Psi_{\eta, M}(t):=\sup \left\{ |f(x,\xi)-f(x, \zeta)|: x\in K_\eta,\, \xi,\zeta\in B^{3\times 3}(0,M), |\xi-\zeta|\le t \right\},
\]
is continuous, takes value $0$ for $t=0$ and is bounded. By definition of $\Psi_{\eta, M}$ and $\mathbf P_{s}$ and \cite[Lemma 3.1]{tacha1} (see also \cite[Proposition 3.2]{focar1}) it follows that for every $x\in K_\eta$, $\xi\in B^{3\times 3}(0,M)$ and $s_1, s_2\in\mathbb R^3$ holds
\begin{align}
    |\Bar{f}(x,s_1, \xi)-\Bar{f}(x,s_2, \xi)|&\le \Psi_{\eta,M}(|\mathbf P_{s_1}(\xi)-\mathbf P_{s_2}(\xi)|) + C_M|\mathbf P_{s_1}(\xi)-\mathbf P_{s_2}(\xi)| \nonumber\\
    & \le \Psi_{\eta,M}(M|\mathbf P_{s_1}- \mathbf P{s_2}|) + C_M|\mathbf P_{s_1}- \mathbf P_{s_2}|\nonumber\\
    &:=\tilde\Psi(|\mathbf P_{s_1}- \mathbf P_{s_2}|), \nonumber
\end{align}
where $|\mathbf P_{s_1}- \mathbf P{s_2}|$ denotes the operator norm of $\mathbf P_{s_1}- \mathbf P_{s_2}$. By the previous inequality, it follows that if we denote
\[
K_\eta^{per}:= \bigcup_{j\in\mathbb Z}(j+K_\eta),
\]
then
\[
 |\Bar{f}(x,s_1, \xi)-\Bar{f}(x,s_2, \xi)|\le\tilde\Psi(|\mathbf P_{s_1}- \mathbf P{s_2}|), \nonumber
\]
for every  $x\in K_\eta^{per}$, $\xi\in B^{3\times 3}(0,M)$ and $s_1, s_2\in\mathbb R^3$.
From the previous inequality it follows that
\begin{align}
    \frac{d \mu}{d \mathcal{L}^{2}} (y_0)&\ge \limsup_{k \rightarrow + \infty} I^{\varepsilon_k}(\bar{w}_k, Q'_1)\nonumber\\
    & \ge \limsup_{k \rightarrow + \infty} \int_{E_{M,k}\cap (\delta_k K_\eta^{per})}  \Bar{f}\left (\frac{y_\alpha}{\delta_k},y_3, s_0, \nabla_{\delta_k} \bar{w}_{k} \right ) dy\nonumber\\
    & - \limsup_{k \rightarrow + \infty} C_M\int_{Q'_{,1}} \tilde\Psi_{\eta, M}(|\mathbf P_{s_0+\rho_k w_k(y)}- \mathbf P_{s_0}|) dy. \nonumber
\end{align}
Since $\tilde\Psi_{\eta, M}$ is continuous, bounded and $\tilde\Psi_{\eta, M}(0)=0$ and since $|\mathbf P_{s_0+\rho_k w_k(y)}- \mathbf P_{s_0}| \to 0$ as $k\to \infty$, then the last term in the previous inequality is also $0$. It follows that
\[
 \frac{d \mu}{d \mathcal{L}^{2}} (y_0) \ge 
 \limsup_{k \rightarrow +\infty} \int_{E_{M,k}\cap \delta_k K_\eta^{per}} \bar f\left(\frac{y_\alpha}{\delta_k}, y_3,s_0,\nabla_{\delta_k} \bar w_{k} \right )dy.
\]
From the $\Phi$-growth of $\bar f$ and from Riemann-Lebesgue Lemma we get that
\begin{align}
    &\limsup_{k \rightarrow + \infty} \int_{E_{M,k}\setminus \delta_k K_\eta^{per}}  \bar f\left(\frac{y_\alpha}{\delta_k},y_3, s_0, \nabla_{\delta_k} \bar w_{k} \right)dy
    &\le \limsup_{k \rightarrow + \infty} C(1+\Phi(M))\mathcal{L}^3(Q'_1\setminus \delta_k K_\eta^{per})\nonumber\\
    \nonumber\\
    && = C(1+\Phi(M))\mathcal{L}^3(Q'_1\setminus K_\eta)
    \le C(1+\Phi(M))\eta.\nonumber
\end{align}
From the previous inequality we deduce that
\[
\frac{d \mu}{d \mathcal{L}^{2}} (y_0) \ge \limsup_{k \rightarrow + \infty} \int_{E_{M,k}}  \Bar{f}\left (\frac{y_\alpha}{\delta_k},y_3, s_0, \nabla_{\delta_k} \bar w_{k} \right )dy - C(1+\Phi(M))\eta.
\]
Since $\eta$ is arbitrary, then for $\eta \to 0$ we get
\begin{align}
    \label{lowerboundM}
    \frac{d \mu}{d \mathcal{L}^{2}} (y_0) \ge \limsup_{k \rightarrow + \infty} \int_{E_{M,k}}  \Bar{f}\left (\frac{y_\alpha}{\delta_k},y_3, s_0, \nabla_{\delta_k} \bar w_{k} \right )dy.
\end{align}
On the other hand, by construction, $\mathcal{L}^3(Q'_{,1}\setminus E_{M,k})\to 0$ uniformly with respect to $k$ as $M\to +\infty$. Since $\{|\nabla_{\varepsilon_k}\bar w_k|^p\}_k$ is equi-integrable in $\Omega$, then from the $\Phi$-growth of $\bar f$ we get for $M \to +\infty$
\[
\sup_{k}\int_{Q'_{,1}\setminus E_{M,k}} \bar f\left(\frac{y_\alpha}{\delta_k},y_3, s_0,\nabla_{\delta_k}\bar w_k\right)dy \le \sup_{k} C \int_{Q'_{,1}\setminus E_{M,k}} \Phi\left( 1+ |\nabla_{\delta_k}\bar w_k|\right)dy \to 0. 
\]
From this limit and from \eqref{lowerboundM} we conclude that
\[
\frac{d \mu}{d \mathcal{L}^{2}} (y_0) \ge \limsup_{k \rightarrow + \infty} \int_{Q'_{,1}}  \bar f\left (\frac{y_\alpha}{\delta_k},y_3, s_0, \nabla_{\delta_k} \bar w_{k} \right )dy.
\]
Using now Theorem \ref{intrep} we get
\[
\frac{d \mu}{d \mathcal{L}^{2}} (y_0)\ge \int_{Q'} \bar f_{\rm hom}^0(u(y_0), \nabla_\alpha u(y_0))dy= \bar f_{\rm hom}^0(u(y_0), \nabla_\alpha u(y_0)).
\]
By Proposition \ref{characterization} it follows that
\[
\bar f_{\rm hom}^0(u(y_0), \nabla_\alpha u(y_0))=Tf_{\rm hom}^0(u(y_0), \nabla_\alpha u(y_0)).
\]
\end{proof}

	\begin{proof}[Proof of theorem \ref{casopm1}]
	Since $\Phi \in \Delta_{2}\cap\nabla_{2}$, $L^{\Phi}(\Omega;\mathbb{R}^{d})$ is separable (see \cite{adam}) and then there exists a subsequence $\{\varepsilon_{n_{k}}\}$ such that $\mathcal I(\cdot, \omega)$ in \eqref{Io} is the $\Gamma$-limit of $\{I_{\varepsilon_{n_{k}}}\}$ for the strong $L^{\Phi}(\Omega;\mathbb{R}^{3})$-topology. Now in view of $\Phi$-growth condition $(H_{2})$ and the closure of the pointwise constraint under strong $L^{\Phi}$-convergence, we have $\mathcal I(u) < +\infty$ if and only if $u \in W^{1}L^{\Phi}(\omega;\mathcal{M})$. Hence, as a consequence of Propositions \ref{Glimsup} and \ref{liminf}, the functionals $\{I^{\varepsilon_{n_{k}}}\}$ $\Gamma$-converge (with respect to $L^{\Phi}(\Omega;\mathbb{R}^{3})$ convergence) to the functiona $I$ defined in \eqref{candidatepm1}. 
     However, since the $\Gamma$-limit $\mathcal I$ does not depend on the extracted subsequence, in light of \cite[Proposition 8.3]{DMbook} and Definition \ref{defGammafamilies}, we get that the whole family $\{I_{\varepsilon}\}$ $\Gamma$-converges to $I$ in $L^\Phi(\Omega)$.

    \end{proof}

    We conclude by observing that the above result extends \cite[Theorem 1.1]{ELTZ}, since the $p$-growth conditions therein are a particular case of the Orlicz setting considered here.
	\begin{rem}
		Let $\Phi(t) = \frac{t^{p}}{p}$ ($p>1, \;t\geq 0$), then $\Phi \in \Delta_{2}\cap \nabla_2$ (with $\widetilde{\Phi}(t)=\frac{t^{q}}{q}$, $q=\frac{p}{p-1}$) and one has $L^{\Phi}(\Omega;\mathbb{R}^{d})\equiv L^{p}(\Omega;\mathbb{R}^{d})$, $L^{\Phi}(\Omega\times Y_{per}) \equiv L^{p}(\Omega; L^{p}_{per}(Y))$ and $W^{1}L^{\Phi}(\Omega;\mathbb{R}^{d})\equiv W^{1,p}(\Omega;\mathbb{R}^{d})$.  
        
        We omit the proofs  of the homogenization Theorem \ref{c1eq3Ma} and relaxation Theorem \ref{c1eq6Ma} since they are straightforward applications. 
	\end{rem}

	\vspace{0.3cm}

	\textbf{Acknowledgements.} Part of this work has been done during the CIMPA-ICTP visit of the second author at Department of Basic and Applied Science of Sapienza-University of Rome, whose hospitality is gratefully acknowledged. Indeed J.F. Tachago acknowledges the support of CIMPA-ICTP 'Research in Pairs' fellowship 2026,  E.~Zappale acknowledges alos the support of the project
	``Mathematical Modelling of Heterogeneous Systems (MMHS)",
	financed by the European Union - Next Generation EU,
	CUP B53D23009360006, Project Code 2022MKB7MM, PNRR M4.C2.1.1.  She is a member of the Gruppo Nazionale per l'Analisi Matematica, la Probabilit\`a e le loro Applicazioni (GNAMPA) of the Istituto Nazionale di Alta Matematica ``F.~Severi'' (INdAM). 
	

\end{document}